\numberwithin{equation}{section}
\theoremstyle{plain}
\newtheorem{theorem}{Theorem}
\newtheorem{lemma}{Lemma}[section]
\newtheorem{propos}{Proposition}
\newtheorem{property}{Property}
\newtheorem{cor}{Corollary}
\newtheorem{example}{Example}
\theoremstyle{definition}
\newtheorem{definition}{Definition}
\newtheorem{remark}{Remark}
\newcommand{\GL}{\operatorname{GL}}
\newcommand{\SL}{\operatorname{SL}}
\newcommand{\Sp}{\operatorname{Sp}}
\newcommand{\SO}{\operatorname{SO}}
\newcommand{\Sym}{\operatorname{Sym}}
\newcommand{\Hom}{\operatorname{Hom}}
\newcommand{\Tor}{\operatorname{Tor}}
\newcommand{\End}{\operatorname{End}}
\newcommand{\Spec}{\operatorname{Spec}}
\newcommand{\SMF}{${\mathrm{SMF}}$}
\newcommand{\TMF}{${\mathrm{TMF}}$}
\newcommand{\STMF}{${\mathrm{STMF}}$}
\begin{document}

\title{Syzygies of quadratic Veronese embedding}
\author{I.\,V.~Netay}
\address{Institute for information Transmission Problems \\
Laboratory of Algebraic Geometry and its Applications, National Research University Higher School of Economics}
\email{netay@iitp.ru}

\footnote[0]{The author is partially supported by AG Laboratory HSE, RF government grant, ag. 11.G34.31.0023, grant MK-2858.2014.1, RFBR-2969.2014.1, by a grant from Dmitry Zimin’s “Dynasty” foundation, RFBR 14-01-31398}%


\maketitle


\begin{abstract}
	In this work we explicitly calculate syzygies of quadratic Veronese embedding~$\mathbb{P}(V)\subset\mathbb{P}(\Sym^2V)$ as representations of the group~$\GL(V)$.
	Also resolutions of the sheaves $\mathscr{O}_{\mathbb{P}(V)}(i)$ are constructed in the category~$D(\mathbb{P}(\Sym^2V))$.
\end{abstract}

\section{Introduction}

	Let~$V$ be a vector space with coordinates~$(x_0,\ldots,x_n)$ over a field~$\Bbbk$, $\mathbb{P}(V)$ be its projectivization with homogeneous coordinates~$(x_0:\ldots:x_n)$.
	We assume that the ground field~$\Bbbk$ has zero characteristic and is algebraically closed.
	Consider the space~$M$ of matrices of rank~$1$ in the space~$\Sym^2V$ of symmetrical matrices with coordinates~$(x_0^2:\ldots:x_ix_j:\ldots:x_n^2)$, where~$0\leqslant i\leqslant j\leqslant n$.
	Clearly, it is a cone.
	It is well known that~$\mathbb{P}(M)\simeq\mathbb{P}(V)\subset \mathbb{P}(\Sym^2V)$.
	This embedding is called {\it quadratic Veronese embedding}.
	It is also well known that the minimal set of equations defining this variety is a set of all $(2\times2)$-minors of the matrix (see~\cite[Lecture~2, example~2.6]{Harris AG}).
	Denote by~$S$ the algebra of polynomials~$\Bbbk[\Sym^2V]$ on the projective space~$\mathbb{P}(\Sym^2V)$ and by~$A$ the projective coordinate algebra of the image of the Veronese embedding~$\mathbb{P}(M)$. The algebra~$A$ is a $S$-module.

	Given a projective variety~$X\subset\mathbb{P}(V)$, denote by~$A(X)$ it projective coordinate algebra and by~$I(X)$ the ideal of functions in~$S$ vanishing on~$X$.
	We get the exact triple~$0\to I(X)\to S\xrightarrow{f_0} A \to 0$.
	Let us choose a set of homogeneous generators~$g_1,\ldots,g_{m_1}$ of the ideal~$I(X)$.
	We get the surjection~$f_1\colon S^{\oplus m_1}\to I(X)$ defined by the row~$(g_1,\ldots,g_{m_1})$ such that the following sequence of $S$-modules is exact:
	\[
		S^{\oplus m_1} \xrightarrow{f_1} S \xrightarrow{f_0} A \to 0.
	\]
	The map~$f_1$ can have a kernel.
	Choosing a set of homogeneous generators of~$\ker f_1$, we extend the sequence to the left:
	\[
		S^{\oplus m_2} \xrightarrow{f_2} S^{\oplus m_1} \xrightarrow{f_1} S \xrightarrow{f_0} A \to 0.
	\]
	Iterating this operation, we obtain finally an exact sequence of graded $S$-modules:
	\[
		\cdots \to F_2 \to F_1 \to S\to A \to 0,
	\]
	where $F_p = R_p \otimes S$, and~$R_p$ are graded vector spaces over the field~$\Bbbk$.
	We put~$R_{p,q} = (R_p)_q$.
	Such exact sequence of $S$-modules is called a {\it free resolution of the module $A$}.

	Note that there is no way to choose generators in $\ker f_p$ canonically.
	Nevertheless under some minimality conditions the spaces $R_{p,q}$ are defined canonically.
	Since on each step of the construction we take a homogeneous basis, the matrices of maps~$f_p$ consist of homogeneous elements.
	A resolution~$F_\bullet = \{\cdots\to F_2\to F_1\to F_0=S\}$ is called {\it minimal}, if we tame minimal possible number of generators on each step.
	Obviously, if the matrix defining the map~$f_p$ contains an element of degree zero, then the resolution~$F_\bullet$ is not minimal.
	Conversely, if all matrix elements have positive degrees, then the resolution is minimal.

	\begin{definition}
		If a resolution~$F_\bullet$ is minimal, then the space~$R_{p,q}$ is called {\it syzygy space of order~$p$ and degree~$q$} of a graded $S$-module~$A$.
	\end{definition}

	From Hilbert syzygy theorem (see~\cite{HilbSyz}) each module over the ring of polynomials in~$n$ variables over a field has a free resolution of length at most~$n$.
	In this work we deal with only modules over rings of polynomials.
	Computation of syzygies is a very complicated problem in general case.
	It is solved algorithmically, although the algorithm requires a huge time amount.
	Here we consider some modules arising from geometrical problems.

	Syzygy spaces of a projective embedding of an algebraic variety are very important invariant of the embedding.
	Many useful information on this topic can be found in the book~\cite{Eis2005}.

	Let us consider the following natural situation: a group $G\subset\GL(V)$ acts linearly on the projective space~$\mathbb{P}(V)$ preserving an algebraic variety~$X\subset \mathbb{P}(V)$.
	Then the natural action of the group~$G$ on projective coordinate algebra~$A$ and on all the syzygy spaces appears (see~\S\ref{sec:Koszul}).
	This allows us to apply representation theory to computation of the spaces~$R_{p,q}$.
	It is the most simple and convenient if the category of representations of the group is semisimple.

	We consider projective homogeneous spaces of reductive algebraic groups.
	In this work we deal mainly with the quadratic Veronese embedding.
	The following questions about syzygies are interesting:
	\begin{itemize}
		\item which of the spaces $R_{p,q}$ are zero?
		\item how to construct the spaces~$R_{p,q}$?
		\item how to construct the maps $f_p$?
	\end{itemize}
	In this work we consider only the first two questions.
	Actually, the first question is interesting and has some open problems in particular for homogeneous spaces.
	\begin{definition}
		An embedding~$X\subset\mathbb{P}(V)$ satisfies {\it $N_p$-property}, if
		\begin{itemize}
			\item $\dim R_{0,q} =
				\begin{cases}
					1, & \text{ if }q=0; \\
					0 & \text{ otherwise;}
				\end{cases}
				$
			\item $R_{k,q}=0$, if~$q\ne k+1$ for $1\leqslant k\leqslant p$.
		\end{itemize}
	\end{definition}

	The famous conjecture about $N_p$-property for Veronese embeddings had been stated in~\cite{OtPa}.
	In~\cite{OtPa} it is proved for the case of quadratic Veronese embedding and for the cubic embedding of projective plane.
	It is also proved that the $N_p$-property does not hold in all the cases, where it should not hold according to the conjecture.
	Recently in~\cite{ThVu} the conjecture was proved for cubic Veronese embedding of projective space of any dimension.

	The question about vanishing of syzygy spaces~$R_{p,q}$ of Veronese embeddings is still far from a complete solution.
	Moreover, the problem of description of these spaces seems to be unsolvable in a reasonable form.
	Nevertheless, some cases can have some reasonable answer.

	Consider a projective rational normal curve in the projective space~$\mathbb{P}(V)$ not lying in any hyperplane.
	It is well known that it is a projective line~$\mathbb{P}^1$ embedded into~$\mathbb{P}(V)$ by the Veronese embedding of degree~$\dim V$.
	All the syzygy spaces of this embedding are explicitly calculated (see for example~\cite{Eis2005}) as one of the simplest non-trivial examples of calculation of syzygies.
	The syzygy spaces of projective embeddings of projective plane are calculated for only in few cases.

	Now let us restrict the degree of embedding.
	The syzygy spaces of quadratic Veronese embedding are known from~\cite{RR}.
	In the case of cubic Veronese embedding the problem is open.

	Here we consider quadratic Veronese embedding and $S$-modules~$\bigoplus_{k\geqslant 0}\Sym^{2k+a}V^*$ and calculate their syzygies for all~$a$.
	Geometrically this construction leads us to construction of minimal resolution of invertible sheaves on the image of quadratic Veronese embedding~(see~\S\ref{sec:loc}).

	Let us introduce some notation.
	We will draw Young diagrams to the right and below from the box~$(0,0)$ using non-negative numbers as the coordinates of boxes.
	We call a {\it hook} the subset of boxes of Young diagram~$\lambda$ composed of a box ({\it{vertex}}), all the boxes below it, and all the boxes to the right of it in diagram~$\lambda$.
	We call a hook {\it main}, if its vertex lies on the {\it diagonal}, i.\,e.~has the coordinates~$(k,k)$ for some~$k$.
	Denote the length of the diagonal (number of boxes of the form~$(k,k)$) in~$\lambda$ by~$l(\lambda)$.

	\begin{definition}
		\label{def:Fr not}
		Let $\lambda$ be a Young diagram.
		It is a union of its main hooks.
		Denote their width-height pairs by~$(a_1,b_1)$, \ldots, $(a_l,b_l)$, where~$l=l(\lambda)$.
		We will denote the diagram~$\lambda$ by $(a_1,\ldots,a_k|b_k,\ldots,b_1)$.
	\end{definition}

	Let~$\lambda$ be a dominant weight a reductive group~$G$.
	Denote by~$V_\lambda$ its irreducible representation of highest weight~$\lambda$.
	For more details about representations with highest weight see~\cite{Ha03}.

	The function $C$ puts any Young diagram~$\lambda$ into a correspondence with~$\{1,\ldots,l(\lambda)\}$ and is defined in~\ref{def:C}.

	\begin{theorem}
		\label{main Ver2 sheaf}
		Let $X=\mathbb{P}(V)\subset\mathbb{P}(\Sym^2V)$ be the quadratic Veronese embedding.
		Then there exists a resolution
		\[
			\xymatrix{
				\ldots \ar[r] &
				\bigoplus\limits_{k\geqslant 0}R_{1,k+1}^a \otimes\mathscr{O}(-k-1) \ar[r] &
				\bigoplus\limits_{k\geqslant 0}R_{0,k}^a \otimes\mathscr{O}(-k) \ar[r] &
				\mathscr{O}_X(k) \ar[r] &
				0,
			}
		\]
		where~$n=\dim V$ and
		\begin{equation}
			\label{RpqVer2sheaf}
			R_{p,p+q}^a=
			\begin{cases}
				\bigoplus\limits_{{\omega=\omega' \atop \mathrm{wt}(\omega) = q} \atop l(\omega) =2q-a}V_\omega^*, & q > 0 \text{ or } p=q=0,\\
				\bigoplus\limits_{{\omega=(a_1,\ldots,a_k|b_k,\ldots,b_1) \atop {b_i\leqslant a_i\leqslant b_{i-1}+1,\,\,\,i=1,\ldots,k}}\atop \mathrm{wt}(\omega)=2p+a}
				(V_\omega^*)^{
					\oplus
					{
						\binom
							{|C(\omega)|-1}
							{s}
					}
				}, & p>0, q=0, s\in \mathbb{Z},
			\end{cases}
		\end{equation}
		where $s={\frac12\left(\sum_{i=1}^k|a_i-b_i-1|-a\right)}$.
	\end{theorem}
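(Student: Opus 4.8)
The plan is to reduce the graded Betti numbers $R^a_{p,q}$ to Koszul cohomology, realize that cohomology as sheaf cohomology of a concrete $\GL(V)$-equivariant bundle on $\mathbb{P}(V)$, and then evaluate it using the representation theory of $\GL(V)$. Set $W=\Sym^2V^*$, so that $S=\Sym W$, and let $M^a=\bigoplus_{k\ge 0}\Sym^{2k+a}V^*$ be the graded $S$-module whose associated sheaf on $\mathbb{P}(\Sym^2V)$ is $\mathscr{O}_X(a)$. As recalled in \S\ref{sec:Koszul}, $R^a_{p,q}=\Tor^S_p(M^a,\Bbbk)_q$, which is the homology at the middle term of
\[
\Lambda^{p+1}W\otimes M^a_{q-p-1}\xrightarrow{\delta}\Lambda^{p}W\otimes M^a_{q-p}\xrightarrow{\delta}\Lambda^{p-1}W\otimes M^a_{q-p+1}
\]
for the $\GL(V)$-equivariant Koszul differential $\delta$. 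Writing $q=p+q'$, the relevant strand is assembled from the single equivariant family $\delta\colon\Lambda^{p}(\Sym^2V^*)\otimes\Sym^{b}V^*\to\Lambda^{p-1}(\Sym^2V^*)\otimes\Sym^{b+2}V^*$, so the whole theorem becomes a statement about the kernels and cokernels of these maps.

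The second step realizes this strand geometrically — and simultaneously produces the asserted resolution in $D(\mathbb{P}(\Sym^2V))$. Let $\xi=\ker\bigl(\Sym^2V^*\otimes\mathscr{O}_{\mathbb{P}(V)}\to\mathscr{O}(2)\bigr)$ be the bundle of quadratic forms vanishing at a point. Splicing the short exact sequences obtained from $0\to\xi\to\Sym^2V^*\otimes\mathscr{O}\to\mathscr{O}(2)\to0$ yields, for every $m$, a locally free resolution of $\Lambda^m\xi$:
\[
0\to\Lambda^{m}\xi\to\Lambda^{m}\Sym^2V^*\otimes\mathscr{O}\to\Lambda^{m-1}\Sym^2V^*\otimes\mathscr{O}(2)\to\cdots\to\mathscr{O}(2m)\to0.
\]
Twisting by $\mathscr{O}(a)$ and taking global sections recovers exactly the internal-degree-$m$ strand of the Koszul complex of $M^a$. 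Since for $a\ge 0$ all the line-bundle terms have cohomology only in degree $0$ (in general only in degrees $0$ and $\dim V-1$), the hypercohomology spectral sequence degenerates and gives the key identification
\[
R^a_{p,p+q}\ \cong\ H^{q}\bigl(\mathbb{P}(V),\ \Lambda^{p+q}\xi\otimes\mathscr{O}(a)\bigr).
\]
This is Weyman's geometric technique for the desingularization $Z=\mathrm{Tot}\,\mathscr{O}_{\mathbb{P}(V)}(-2)\to\widehat{X}\subset\Sym^2V$ of the affine cone over the Veronese, with twisting sheaf $\mathscr{O}_{\mathbb{P}(V)}(a)$; feeding the same data into that machine produces the complex of $\mathscr{O}(-k)$'s resolving $\mathscr{O}_X(a)$ in $D(\mathbb{P}(\Sym^2V))$ (compare \S\ref{sec:loc}). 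For strongly negative $a$ one reduces to $a\in\{0,1\}$ using $M^{a-2}\cong M^{a}(-1)$, or invokes Serre duality for the extra top cohomology.

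Next one computes $H^{\bullet}(\mathbb{P}(V),\Lambda^{m}\xi\otimes\mathscr{O}(a))$ as a $\GL(V)$-module. Feeding $0\to\Lambda^{m}\xi\to\Lambda^{m}\Sym^2V^*\otimes\mathscr{O}\to\Lambda^{m-1}\xi\otimes\mathscr{O}(2)\to0$ into the long exact sequence and inducting on $m$ gives $H^{q}(\Lambda^{m}\xi\otimes\mathscr{O}(a))\cong H^{q-1}(\Lambda^{m-1}\xi\otimes\mathscr{O}(a+2))$ for $q\ge 2$, so the whole problem collapses to the kernel of $\delta$ (the case $q=0$) and a cokernel of $\delta$ between two such kernels (the case $q=1$, hence every $q>0$). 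For the representation-theoretic input I would use the classical plethysm $\Lambda^{p}(\Sym^2V^*)=\bigoplus_{\lambda}V^*_\lambda$, summed over partitions of $2p$ all of whose main hooks have leg equal to arm minus one, together with Pieri's rule for $\otimes\Sym^bV^*$. This rewrites each term of the relevant complex as an explicit sum of $V^*_\nu$ with $\nu$ ranging over the diagrams whose Frobenius notation $(a_1,\dots,a_k\mid b_k,\dots,b_1)$ of Definition~\ref{def:Fr not} obeys $b_i\le a_i\le b_{i-1}+1$, and it splits the complex into $V^*_\nu$-isotypic subcomplexes of $\Bbbk$-vector spaces: the term at a given spot is spanned by the plethysm-type subdiagrams $\lambda\subseteq\nu$ with $\nu/\lambda$ a horizontal strip of prescribed size, and the induced map is an explicit ``strip-moving'' differential. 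The claim to prove is that each such isotypic subcomplex is, up to shift, a truncated Koszul complex on a number of generators governed by the distinguished set $C(\nu)$ of main hooks of \ref{def:C}; hence its cohomology lives in a single spot and has dimension the binomial coefficient of \eqref{RpqVer2sheaf}, with truncation depth $s=\tfrac12\bigl(\sum_i|a_i-b_i-1|-a\bigr)$. Identifying which spot survives reads off the two branches of the formula — $\nu$ self-conjugate with the stated diagonal length and weight when $q>0$, all admissible $\nu$ with the binomial multiplicity when $q=0$.

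Minimality of the complex of free modules so obtained is automatic: its $p$-th term is generated in internal degrees $\ge p$, the degree-$(p+q)$ summand coming from $H^q$, and there are no degree-zero differentials because $R^a_{p,q}$ was defined as $\Tor^S_p(M^a,\Bbbk)_q$ (or, if one prefers, by the degeneration above). Summing the $V^*_\nu$-contributions over all $\nu$ then assembles both \eqref{RpqVer2sheaf} and the resolution of $\mathscr{O}_X(a)$; specializing $a=0$ should recover the syzygies of the quadratic Veronese from \cite{RR}, a useful check. I expect essentially all of the difficulty to be concentrated in the isotypic analysis of the third step — pinning down the strip-moving differential, identifying the isotypic subcomplexes with truncated Koszul complexes, computing their cohomology, and translating the resulting binomials into the precise conditions on $C(\omega)$, $l(\omega)$ and self-conjugacy in the statement; the plethysm and Pieri bookkeeping, although classical, is itself delicate, since it is exactly what determines which $\nu$ can occur.
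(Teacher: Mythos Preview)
Your proposal is correct and lands on the same combinatorial core as the paper, but you route through an extra geometric layer that the paper does not use. The paper works directly with the Koszul complex \eqref{main Ver2 complex}: it parametrizes the irreducible summands of $\Lambda^w\Sym^2V^*\otimes\Sym^aV^*$ by the shaded diagrams $Z_n(w,a)$ (your plethysm-plus-Pieri bookkeeping), and then proves in Lemma~\ref{Ver2 lemma} that each isotypic component $\mathrm{pr}_\omega(K^\bullet)$ is a truncated combinatorial cube $\Lambda^{\ge s}\Bbbk^{C(\omega)}$ in the sense of~\S\ref{sec:cube}, relying on \cite[Lemma~3.5]{Net13} for the uniqueness of such complexes. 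The two branches of \eqref{RpqVer2sheaf} then fall out of Proposition~\ref{0cubes Ver2} (the $0$-cubes are exactly the self-conjugate $\omega$ with the right parity, contributing to $q>0$) and the remaining truncated cubes (contributing to $q=0$); passage to sheaves is just Proposition~\ref{lem:local}. Your third step --- strip-moving differential, isotypic subcomplexes identified with truncated Koszul complexes, binomial multiplicities --- is precisely this argument, and you are right that it is where all the content lives.

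What you add is the Weyman-style identification $R^a_{p,p+q}\cong H^q(\mathbb{P}(V),\Lambda^{p+q}\xi\otimes\mathscr{O}(a))$ via the bundle $\xi=\ker(\Sym^2V^*\otimes\mathscr{O}\to\mathscr{O}(2))$ and the inductive reduction $H^q(\Lambda^m\xi(a))\cong H^{q-1}(\Lambda^{m-1}\xi(a+2))$. This is a genuine and pleasant conceptual gain: it explains structurally why the cohomology is concentrated and why $q>0$ behaves uniformly, and it places the computation inside a standard framework. But it does not shortcut the combinatorics --- you still have to carry out the isotypic analysis to read off which $V_\omega^*$ survive and with what multiplicity, which is exactly Lemma~\ref{Ver2 lemma}. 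The paper's purely algebraic route is shorter and more self-contained (no hypercohomology, no spectral sequence, no special handling of negative $a$); your route is more geometric and makes the link to~\cite{RR} and to the general desingularization machinery transparent. Either way the decisive step, and the one you should write out carefully, is the proof that the $\omega$-isotypic subcomplex is a truncated cube on $|C(\omega)|$ generators with truncation depth $s$.
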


	\begin{remark}
		In the case~$a<0$ such a resolution can be obtained from the resolution for~$a=0$ by the grading shift by~$-a$.
		Therefore, it is consistent only for~$a\geqslant0$.
	\end{remark}

	In section~\ref{sec:main} we prove this theorem and describe the structure of the corresponding Koszul complex as a complex of representation of $\GL(V)$.
	After we classify all the cases that admit analogous computations of syzygy spaces of projectivizations of highest wight vector orbits~(see\,\S\ref{sec:class}) and that admit an analogous description of Koszul complex structure.
	Possibility of such computations is based on the following two properties of a dominant weight~$\lambda$ of reductive group~$G$.
	\begin{property}
		\label{Lproperty}
		For any~$k$ the representation~$\Lambda^kV_\lambda$ of~$G$ has no multiple subrepresentations.
	\end{property}
	We will denote the property by \SMF (from <<skew multiplicity free>> following~\cite{Pecher}).
	\begin{property}
		\label{Sproperty}
		For any dominant weight~$\mu$ and any~$n$ the representation~$V_\mu\otimes V_{n\lambda}$ has no multiple subrepresentations.
	\end{property}
	We will denote this property by~\STMF.
	As it will be shown in the section~\ref{sec:class}, these cases consist of two infinite series: Segre embedding (see results in~\cite{Net13}, where the computation is proceeded in an analogous way), quadratic Veronese embedding and a finite set of cases listed in Proposition~\ref{classification}.
	See the classification in section~\ref{sec:class}.

	Consider the following property to simplify the classification.
	\begin{property}
		\label{sproperty}
		For any dominant weight~$\mu$ the representation~$V_\mu\otimes V_{\lambda}$ has no multiple subrepresentation.
	\end{property}
	This property is a particular case of~\STMF. We will denote it by~\TMF.
	Conversely, the property~\ref{sproperty} for any multiplicity of the weight~$\lambda$ coincides with the property~\ref{Sproperty} for the weight~$\lambda$.
	In section~\S\ref{sec:class} we classify all the representations of highest weights that are \SMF and \STMF.

	Some analogous classifications form the core of paper~\cite{Pecher}, where representations with some property similar to~\SMF are classified.
	In this work the cases are classified, where exterior algebra of a representation of a reductive group has no multiple subrepresentations.
	We need some more weak property: each exterior power of a representation has no multiple subrepresentations.
	These properties coincide for so called {\it saturated\footnote{Saturatedness is a condition on the action of the central torus of the reductive group; in the framework of this paper we can assume it to be satisfied. See the definition (not only for irreducible representations) in~\cite[\S2]{Pecher}.}} representations (for example, the tautological representation of~$\GL(V)$ is saturated and the tautological representation of~$\SL(V)$ is not).
	In~\cite{Pecher} more general representations are considered, not only irreducible.
	For convenience of the reader we give the classification concerning only interesting for us cases. This allows to make it more simple.
	To proof to we need to look over a big number of particular cases of representations. We use computer algebra systems LiE and SCHUR.

	After if the Appendix~\ref{sec:app} some examples are given, in the Appendix~\ref{sec:wps} questions related to weighted projective spaces are considered.

	The author is grateful for S.\,O.\,Gorchinskiy, E.\,B.\,Vinberg for useful discussions and also for V.\,V.\,Ostrik for the idea of the proof of Lemma~\ref{tensormultlemma} and for a useful reference.

	This paper is to be published in Matematicheskii Sbornik.

\section{Preliminaries}
\label{sec:prelim}
	\subsection{Koszul complex}
	\label{sec:Koszul}

	Let~$X\subset\mathbb{P}(W)$ be a projective variety the in projective space,~$A_q=\mathrm{H}^0(X,\mathscr{O}(q))$ be~$q$-th homogeneous component in the projective coordinate algebra of the projective variety~$X$.
	Let~$\imath\colon\Lambda^pW^*\to\Lambda^{p-1}W^*\otimes\nolinebreak W^*$ be the map dual to the exterior multiplication map $\Lambda^{p-1}W\otimes W\to\Lambda^pW$.
	Using the multiplication map~$\pi_q\colon A_1\otimes\nolinebreak A_q\to\nolinebreak A_{q+1}$, let us define the map $d_{p,q}$ as a composition:
	\begin{equation}
		\label{d def}
		\xymatrix{
			\Lambda^{p-1}W^*\otimes W^*\otimes A_q \ar[r] &
			\Lambda^{p-1}W^*\otimes A_1\otimes A_q \ar[d]^-{\mathrm{Id}\otimes \pi_q} \\
			\Lambda^pW^*\otimes A_q \ar[u]^-{\imath\otimes\mathrm{Id}}\ar[r]_-{d_{p,q}} &
			\Lambda^{p-1}W^*\otimes A_{q+1},
		}
	\end{equation}
	where the upper arrow is induced by the natural restriction map
	\[
		W^* = \mathrm{H}^0(\mathbb{P}(W),\mathscr{O}(1)) \to
		\mathrm{H}^0(X,\mathscr{O}(1))=A_1.
	\]

	\begin{lemma}
	\label{lemma on Koszul complex of arbitrary variety}
		The sequence of groups and morphisms
		\begin{equation}
			\label{Koszul complex in general case}
			\xymatrix{
				\ldots \ar[r] &
				\Lambda^{p+1}W^*\otimes A_{q-1} \ar[r]^-{d_{p+1,q-1}} &
				\Lambda^pW^*\otimes A_{q} \ar[r]^-{{d_{p,q}}} &
				\Lambda^{p-1}W^*\otimes A_{q+1} \ar[r] &
				\ldots
			}
		\end{equation}
		is a complex. Moreover, its cohomology groups are the syzygy spaces
		\[
			R_{p,p+q}=\frac{\ker(d_{p,q})}{\mathrm{im}(d_{p+1,q-1})}.
		\]
	\end{lemma}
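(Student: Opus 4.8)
The plan is to establish the statement in two stages: first verify that the maps $d_{p,q}$ compose to zero, and then identify the cohomology with the graded pieces of the Tor modules computing the minimal free resolution of $A$ over $S$.

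For the first stage, I would unwind the definition of $d_{p,q}$ explicitly in terms of a basis. Writing $\imath$ as the Koszul-type contraction $\omega\mapsto\sum_j (e_j^*\lrcorner\,\omega)\otimes e_j^*$ (up to sign conventions), the composite $d_{p,q}\circ d_{p+1,q-1}$ applied to an element of $\Lambda^{p+1}W^*\otimes A_{q-1}$ produces a double contraction $\sum_{j,k}(e_k^*\lrcorner\,e_j^*\lrcorner\,\omega)\otimes$ (product of the images of $e_j^*$ and $e_k^*$ in $A_1$, times the $A_{q-1}$ part). The contraction factor $e_k^*\lrcorner\,e_j^*\lrcorner\,\omega$ is antisymmetric in $(j,k)$, while the multiplication factor in $A_1\cdot A_1\subset A_2$ is symmetric in $(j,k)$ because $A$ is a commutative algebra; hence the sum vanishes. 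This is the standard argument that the Koszul differential squares to zero, now twisted by the module structure of $A_\bullet$ over the symmetric algebra $\Sym W^* = S$; it is routine and I would present it compactly.

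For the second stage—the identification of the cohomology with $R_{p,p+q}$—the key observation is that the complex~\eqref{Koszul complex in general case}, assembled over all $q$, is exactly the degree-$(p+q)$ strand of $\Lambda^\bullet W^*\otimes_\Bbbk A$ with the Koszul differential, i.e. the Koszul complex $K_\bullet(W^*)\otimes_S A$ that computes $\operatorname{Tor}^S_\bullet(\Bbbk, A)$. Indeed $K_\bullet(W^*) = \Lambda^\bullet W^*\otimes_\Bbbk S$ is the Koszul resolution of the residue field $\Bbbk = S/\mathfrak{m}$, so tensoring with $A$ over $S$ and taking homology yields $\operatorname{Tor}^S_p(\Bbbk,A)$, graded by internal degree. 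It is a standard fact (Eisenbud, \cite{Eis2005}) that for a minimal free resolution $F_\bullet = R_\bullet\otimes S\to A$ the differentials have entries in $\mathfrak{m}$, so $F_\bullet\otimes_S\Bbbk$ has zero differential and $\operatorname{Tor}^S_p(\Bbbk,A)_{p+q} = (R_p)_{p+q} = R_{p,p+q}$. Matching the internal degree: in bidegree $(p,q)$ the term $\Lambda^p W^*\otimes A_q$ sits in internal degree $p+q$ (each factor of $W^*$ contributes $1$, and $A_q$ contributes $q$), which gives precisely the index $R_{p,p+q}$ as claimed.

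The step I expect to require the most care is making the identification of~\eqref{Koszul complex in general case} with the Koszul complex $\Lambda^\bullet W^*\otimes_\Bbbk A$ fully precise, in particular checking that the differential defined via the geometric data (the restriction $W^* = \mathrm{H}^0(\mathbb{P}(W),\mathscr{O}(1))\to A_1$ followed by multiplication $\pi_q$ in $A$) agrees on the nose, including signs, with the algebraic Koszul differential coming from the $S$-module structure on $A = \bigoplus_q A_q$. Once this dictionary is set up, everything else is either the standard homological algebra of Koszul complexes or the purely formal degree bookkeeping above; no genuinely new difficulty arises.
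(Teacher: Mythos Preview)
Your proposal is correct and follows essentially the same approach as the paper: identify the complex~\eqref{Koszul complex in general case} with the degree-$(p+q)$ strand of $\Lambda^\bullet W^*\otimes_S A$ obtained by tensoring the Koszul resolution of $\Bbbk$ with $A$, and then read off $R_{p,p+q}=\Tor_p^S(\Bbbk,A)_{p+q}$ from the minimality of the free resolution. The only difference is cosmetic: the paper skips your Stage~1 entirely, since the identification in Stage~2 already exhibits~\eqref{Koszul complex in general case} as a tensor product of a known complex with a module, so $d^2=0$ comes for free and the explicit basis computation is redundant.
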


	\begin{proof}
	Consider the Koszul complex~$\Lambda^\bullet W^*\otimes\Sym^\bullet W^*=\Lambda^\bullet W^*\otimes S$ (see~\cite{GelMan89},~ch.\,1).
	It is quasi-isomorphic to the trivial~$S$-module~$\Bbbk$.
	Tensoring it over~$S$ with~$A$, we obtain the quasi-isomorphism~$\Lambda^\bullet W^*\otimes A\cong\Bbbk\mathop{\otimes}\limits_S^L A$.
	Therefore, the sequence of morphisms
	\[
		\ldots\to\Lambda^{p+1}W^*\otimes A\to
		\Lambda^pW^*\otimes A\to
		\Lambda^{p-1}W^*\otimes A\to\ldots
	\]
	is a complex, and its cohomology groups equal graded vector spaces~$\Tor_p^S(\Bbbk,A)$.

	Since the differential~$d$ of the complex~\eqref{Koszul complex in general case} is homogeneous of degree~$0$, we can decompose the Koszul complex into the sum of subcomplexes:
	\[
		\xymatrix{
			\ldots \ar[r] &
			\Lambda^{p+1}W^*\otimes A_{q-1} \ar[r]^-{d_{p+1,q-1}} &
			\Lambda^pW^*\otimes A_{q} \ar[r]^-{{d_{p,q}}} &
			\Lambda^{p-1}W^*\otimes A_{q+1} \ar[r] &
			\ldots
		}
	\]

	Finally, we get
	$
		R_{p,p+q}=(\Tor_p^S(A,\Bbbk))_{p+q}=\frac{\ker(d_{p,q})}{\mathrm{im}(d_{p+1,q-1})}.
	$
	\end{proof}

	\subsection{Projective coordinate algebras}

	Let~$G$ be a reductive algebraic group.
	Denote by~$W=V_{\lambda}$ the irreducible representation of~$G$ of highest weight~$\lambda$.
	Let~$X$ be the $G$-orbit of the point~$w\in\mathbb{P}(W)$ corresponding to highest weight vector in the representation~$W$.
	In~\cite{LanTow} it is proved that any such variety~$X$ is an intersection of quadrics in~$\mathbb{P}(W)$.

	\begin{remark}
		Recall that $X=G\cdot w\cong G/P$ is a projective variety, where~$P$ is a parabolic subgroup.
		Denote by~$I$ the set of simple roots orthogonal to~$\lambda$.
		Each set~$I$ of simple roots corresponds to a parabolic subgroup~$P_I \subset G$ containing some fixed Borel subgroup~$B$ of the group~$G$.
		Recall that linear bundles on the variety~$G/P_I$ correspond to weights of the group~$G$ orthogonal to all roots in the set~$I$.
		In particular, the bundle~$\mathscr{O}_X(1)=\mathscr{O}_{\mathbb{P}(W)}(1)|_X$ corresponds to the weight~$\lambda$.
	\end{remark}

	The following proposition with more detailed proof and other results about highest weight orbits and more general quasi-homogeneous spaces can be found in~\cite{VinPop72}.

	\begin{propos}
		In the notation above the projective coordinate algebra of the variety~$X$ equals $$A_X=\bigoplus_{n\geqslant 0}V_{n\lambda}^*.$$
	\end{propos}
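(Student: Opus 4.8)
The plan is to realize $X$ as a flag variety $G/P$ and to compute the graded pieces of its homogeneous coordinate ring as spaces of sections of line bundles, via the Borel--Weil theorem. Write $w=[\tilde w]$ for a highest weight vector $\tilde w\in W=V_\lambda$. Since $W$ is irreducible and $X$ is $G$-stable, the linear span of $X$ is a nonzero $G$-submodule of $W$, hence all of $W$; so $X$ is nondegenerate in $\mathbb{P}(W)$, the ideal $I(X)$ contains no nonzero linear form, and $A_1=W^*=V_\lambda^*$. Moreover $X=G\cdot w$ is the closed orbit in $\mathbb{P}(W)$, so $X\cong G/P$ with $P=P_I$ the parabolic attached to the set $I$ of simple roots orthogonal to $\lambda$, and $A_X=S/I(X)$ is an integral domain because $X$ is irreducible; in particular $A_n=A_1^{\,n}\ne 0$ for all $n\geqslant 0$, as $A_X$ is generated in degree one.

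Next I would identify the $G$-linearized bundle $\mathscr{O}_X(1)=\mathscr{O}_{\mathbb{P}(W)}(1)|_X$ with the homogeneous line bundle on $G/P$ attached to the weight $\lambda$: the fibre of $\mathscr{O}_X(-1)$ over the base point is the line $\Bbbk\tilde w$, on which $P$ acts through the character extending $\lambda$. Taking $n$-th tensor powers, $\mathscr{O}_X(n)$ is the homogeneous bundle attached to $n\lambda$, which is again dominant and orthogonal to the roots of $I$. Borel--Weil then gives an isomorphism of $G$-modules $\mathrm{H}^0(X,\mathscr{O}_X(n))\cong V_{n\lambda}^*$ for every $n\geqslant 0$; dominance of $n\lambda$ guarantees that this is the honest Borel--Weil statement, with no Weyl shift or higher-cohomology correction.

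Finally I would pin down $A_X$ itself. By definition $A_n$ is the image of the restriction map $\mathrm{Sym}^nW^*=\mathrm{H}^0(\mathbb{P}(W),\mathscr{O}(n))\to\mathrm{H}^0(X,\mathscr{O}_X(n))\cong V_{n\lambda}^*$, hence a $G$-submodule of the irreducible module $V_{n\lambda}^*$; being nonzero by the first paragraph, it is all of $V_{n\lambda}^*$. Thus $A_n=V_{n\lambda}^*$ for every $n\geqslant 0$ and $A_X=\bigoplus_{n\geqslant0}V_{n\lambda}^*$. One sees along the way that the multiplication $A_m\otimes A_n\to A_{m+n}$ is the Cartan projection $V_{m\lambda}^*\otimes V_{n\lambda}^*\to V_{(m+n)\lambda}^*$, necessarily surjective, in accordance with $A_X$ being generated in degree one.

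The only real input is the section count $\mathrm{H}^0(X,\mathscr{O}_X(n))\cong V_{n\lambda}^*$, and the delicate point is the line-bundle bookkeeping: fixing the convention in which the bundle ``corresponding to a dominant weight $\mu$'' has $\mathrm{H}^0$ equal to $V_\mu^*$ (rather than $V_\mu$ or $V_{-w_0\mu}$), compatibly with the remark above. A self-contained alternative avoiding Borel--Weil is to pass to the affine cone $\widehat X=\overline{G\cdot\tilde w}\subset W$, so that $\Bbbk[\widehat X]=A_X$: since $G/P$ carries a dense $B$-orbit (the big Bruhat cell) and $\lambda|_T\ne 0$, the cone $\widehat X$ also has a dense $B$-orbit, hence $\Bbbk[\widehat X]$ is multiplicity-free as a $G$-module; restricting a $U$-invariant $T$-eigenfunction of degree $d$ to the one-dimensional torus orbit $\Bbbk^{*}\tilde w$ forces its weight to be $d\lambda$, while the Cartan component of $\mathrm{Sym}^dW^*$ shows this weight does occur, again giving $A_X=\bigoplus_{d\geqslant0}V_{d\lambda}^*$. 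This is in essence the proof in~\cite{VinPop72}.
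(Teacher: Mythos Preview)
Your proof is correct and follows essentially the same route as the paper: identify $X$ with $G/P$, recognise $\mathscr{O}_X(n)$ as the homogeneous line bundle of weight $n\lambda$, and apply Borel--Weil to get $\mathrm{H}^0(X,\mathscr{O}_X(n))\cong V_{n\lambda}^*$. The paper's argument is terser because in \S\ref{sec:Koszul} it simply \emph{defines} $A_q=\mathrm{H}^0(X,\mathscr{O}(q))$, so the Borel--Weil computation finishes the proof in one line; you instead take the introduction's definition $A_X=S/I(X)$ and add the (correct and clean) observation that the restriction map $\mathrm{Sym}^nW^*\to V_{n\lambda}^*$ is a nonzero $G$-map to an irreducible target, hence surjective. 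That extra step is exactly what reconciles the two definitions (it is projective normality in disguise), so your version is slightly more complete. Your closing remark about the affine-cone/multiplicity-free argument is indeed the Vinberg--Popov approach the paper cites.
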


	\begin{proof}
		By the Borel--Bott--Weyl theorem (see~\cite{FH}) we get
		$$
			A_X=\bigoplus_{n\geqslant0}\Gamma(X,\mathscr{O}_X(n))=
			\bigoplus_{n\geqslant0}\Gamma(G/P,\mathscr{L}_{n\lambda})=
			\bigoplus_{n\geqslant0}\Gamma(G/B,\mathscr{L}_{n\lambda})=
			\bigoplus_{n\geqslant0}V_{n\lambda}^*.
		$$
		If we have a representation~$V_\lambda$ of~$G$, then it has a unique closed~$G$-orbit isomorphic to~$G/P$ in the projectivization.
		Then the bundle~$\mathscr{L}$ on~$G/P$ can be defined as the restriction of~$\mathscr{O}(1)$ on~$\mathbb{P}(V_\lambda)$ onto~$G/P$.
	\end{proof}

	\begin{cor}
		\label{last_complex}
		In the notation above the complex
		\[
			\ldots\to\Lambda^{p+1}W^*\otimes_\Bbbk V_{(q-1)\lambda}^*\to\Lambda^pW^*\otimes_\Bbbk V_{q\lambda}^*\to\Lambda^{p-1}W^*\otimes_\Bbbk V_{(q+1)\lambda}^*\to\ldots
		\]
		of representations of the group~$G$ computes the syzygy spaces of the variety~$X=G\cdot w\subset \mathbb{P}(W)$.
	\end{cor}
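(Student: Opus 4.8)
The plan is to combine the two facts that immediately precede this statement. First I would invoke the Proposition: since $X=G\cdot w\subset\mathbb{P}(W)$ with $W=V_\lambda$, its projective coordinate algebra is $A=\bigoplus_{n\geqslant 0}V_{n\lambda}^*$, so that the homogeneous component $A_q=\mathrm{H}^0(X,\mathscr{O}(q))$ equals $V_{q\lambda}^*$ for every $q\geqslant 0$. Substituting this identification into the Koszul complex~\eqref{Koszul complex in general case} of Lemma~\ref{lemma on Koszul complex of arbitrary variety} turns it precisely into
\[
	\ldots\to\Lambda^{p+1}W^*\otimes_\Bbbk V_{(q-1)\lambda}^*\to\Lambda^pW^*\otimes_\Bbbk V_{q\lambda}^*\to\Lambda^{p-1}W^*\otimes_\Bbbk V_{(q+1)\lambda}^*\to\ldots,
\]
and by that Lemma its cohomology in the middle term is the syzygy space $R_{p,p+q}$. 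This already proves the statement at the level of $\Bbbk$-vector spaces.

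The only remaining point is to check that this is a complex of $G$-modules, so that the syzygy spaces inherit a $G$-module structure compatible with the displayed complex. I would do this by observing that every map entering the definition~\eqref{d def} of the differential $d_{p,q}$ is $G$-equivariant: the map $\imath\colon\Lambda^pW^*\to\Lambda^{p-1}W^*\otimes W^*$ is dual to exterior multiplication, hence $\GL(W)$-equivariant and a fortiori $G$-equivariant; the restriction $W^*=\mathrm{H}^0(\mathbb{P}(W),\mathscr{O}(1))\to\mathrm{H}^0(X,\mathscr{O}(1))=A_1$ is $G$-equivariant because $X$ is $G$-stable; and the multiplication $\pi_q\colon A_1\otimes A_q\to A_{q+1}$ is $G$-equivariant because $G\subset\GL(W)$ acts on $S=\Sym^\bullet W^*$ by graded algebra automorphisms preserving the homogeneous ideal $I(X)$, hence acts on $A$ by graded algebra automorphisms. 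A tensor product and composition of $G$-equivariant maps is $G$-equivariant, so $d_{p,q}$ is a morphism of $G$-modules, and $R_{p,p+q}=\ker(d_{p,q})/\mathrm{im}(d_{p+1,q-1})$ is a subquotient of a $G$-module, hence a $G$-module.

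I do not expect any genuine obstacle here: this is a corollary, and the substance of the argument was already carried out in Lemma~\ref{lemma on Koszul complex of arbitrary variety} and in the Proposition. The only step requiring care is the equivariance bookkeeping in the previous paragraph, i.e.\ verifying that the restriction and multiplication maps are morphisms of $G$-modules; this is immediate once one notes that the whole construction of~\S\ref{sec:Koszul} is functorial with respect to a linear $G$-action on $W$ that preserves $X$.
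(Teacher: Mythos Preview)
Your argument is correct and is exactly the implicit reasoning the paper intends: the corollary is stated without proof because it follows by substituting the Proposition's identification $A_q=V_{q\lambda}^*$ into Lemma~\ref{lemma on Koszul complex of arbitrary variety}. Your additional verification that the differentials $d_{p,q}$ are $G$-equivariant is a welcome elaboration of a point the paper only alludes to in the Introduction.
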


	Denote by $\Sigma_\lambda$ the Schur functor~$\mathrm{Vect}\to\mathrm{Vect}$.
	It is a polynomial functor on the category of vector spaces such that for the action of the group~$G=\GL(V)$ the representation $\Sigma_\lambda V$ is the unique representation of the group~$G$ with the highest weight~$\lambda$.
	The symbol~$\Sigma_\lambda V'$ denotes the same as~$V_\lambda$ if~$V'=V$ is the tautological representation of the group~$\GL(V)$.
	The explicit construction of the Schur functor can be found, for example, in~\cite{Fulton}.

	\begin{cor}
		Let~$W_i=\Sigma_{\lambda_i}V_i$,~$W_1\otimes\ldots\otimes W_m$ be an irreducible $\GL(V_1)\times\ldots\times\GL(V_m)$-representation,
		$X\subset\mathbb{P}(W)$ be the highest weight orbit of weight~$\lambda_1\oplus\ldots\oplus\lambda_m$ in~$W=W_1\otimes\ldots\otimes W_m$.
		Then the projective coordinate algebra of the variety $X$ equals
		\[
			A_X=\bigoplus_{n\geqslant 0}\Sigma_{n\lambda_1}V_1^*\otimes\ldots\otimes\Sigma_{n\lambda_m}V_m^*
		\]
		as a $\GL(V_1)\times\ldots\times\GL(V_m)$-representation. Therefore, the Koszul complex
		$
			\Lambda^\bullet(W_1\otimes\ldots\otimes W_n)\otimes A_X
		$
		computes the syzygy spaces of~$X$.
	\end{cor}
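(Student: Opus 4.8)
The plan is to obtain this as a direct application of the Proposition above to the reductive group $G=\GL(V_1)\times\ldots\times\GL(V_m)$. First I would invoke the standard structure theory of representations of a direct product of reductive groups: $G$ is again reductive, a Borel subgroup may be taken to be the product of Borels, the character lattice of a maximal torus is the direct sum of the character lattices of the factors, and every irreducible $G$-representation is uniquely an external tensor product $V_{\mu_1}\boxtimes\ldots\boxtimes V_{\mu_m}$ of irreducibles of the factors, with highest weight $\mu_1\oplus\ldots\oplus\mu_m$. In our situation this identifies $W=W_1\otimes\ldots\otimes W_m=\Sigma_{\lambda_1}V_1\otimes\ldots\otimes\Sigma_{\lambda_m}V_m$ with the irreducible $G$-representation of highest weight $\lambda:=\lambda_1\oplus\ldots\oplus\lambda_m$; since multiplication by $n$ is compatible with the direct sum decomposition of the character lattice, $n\lambda=n\lambda_1\oplus\ldots\oplus n\lambda_m$, and hence $V_{n\lambda}=\Sigma_{n\lambda_1}V_1\otimes\ldots\otimes\Sigma_{n\lambda_m}V_m$.

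By hypothesis $X\subset\mathbb{P}(W)$ is precisely the highest weight orbit $G\cdot w$ of $\lambda$ (concretely, $w=w_1\otimes\ldots\otimes w_m$ for highest weight vectors $w_i\in W_i$, so that $X$ is the image of $X_1\times\ldots\times X_m$ under the Segre embedding $\mathbb{P}(W_1)\times\ldots\times\mathbb{P}(W_m)\hookrightarrow\mathbb{P}(W)$, where $X_i=\GL(V_i)\cdot w_i$; this is the unique closed $G$-orbit in $\mathbb{P}(W)$). Thus the Proposition applies verbatim and yields $A_X=\bigoplus_{n\geqslant 0}V_{n\lambda}^*$. Substituting the identification of $V_{n\lambda}$ from the first paragraph, and using the standard isomorphism $(\Sigma_\mu V_i)^*\cong\Sigma_\mu(V_i^*)$ of $\GL(V_i)$-representations (a basic property of Schur functors applied to finite-dimensional spaces), one obtains
\[
	A_X=\bigoplus_{n\geqslant 0}\Sigma_{n\lambda_1}V_1^*\otimes\ldots\otimes\Sigma_{n\lambda_m}V_m^*
\]
as a $\GL(V_1)\times\ldots\times\GL(V_m)$-representation, which is the asserted formula.

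The last assertion is then immediate from Corollary~\ref{last_complex}, which is itself an instance of Lemma~\ref{lemma on Koszul complex of arbitrary variety}: since $W$ is an irreducible representation of the reductive group $G$ and $X$ is its highest weight orbit, the complex $\Lambda^\bullet W^*\otimes A_X=\Lambda^\bullet(W_1^*\otimes\ldots\otimes W_m^*)\otimes A_X$, with differentials $d_{p,q}$ built from the multiplication maps $A_1\otimes A_q\to A_{q+1}$ of the graded algebra $A_X$, has cohomology groups equal to the syzygy spaces $R_{p,p+q}$ of $X\subset\mathbb{P}(W)$. I do not expect a genuine obstacle in this argument: once the representation theory of direct products is invoked, everything follows formally from the Proposition and from Corollary~\ref{last_complex}. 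The only point worth stating carefully is the identification of the external tensor product of the individual highest weight orbits with the highest weight orbit (equivalently, the unique closed orbit) of the tensor product representation, so that the Proposition applies without modification.
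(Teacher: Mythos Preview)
Your proposal is correct and is exactly the intended argument: the paper states this result as a corollary without proof, and the implicit reasoning is precisely to specialize the preceding Proposition and Corollary~\ref{last_complex} to the reductive group $G=\GL(V_1)\times\ldots\times\GL(V_m)$, using that irreducibles of a product are external tensor products so that $V_{n\lambda}=\Sigma_{n\lambda_1}V_1\otimes\ldots\otimes\Sigma_{n\lambda_m}V_m$. Your write-up makes explicit the one point the paper leaves tacit, namely the identification of the highest weight orbit in $\mathbb{P}(W_1\otimes\ldots\otimes W_m)$ with the Segre image of the product of the individual highest weight orbits.
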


\subsection{Localization}
\label{sec:loc}

	After in Section~\S\ref{sec:main} we will compute cohomology groups of the Koszul complex and therefore we will construct minimal free resolutions  for some graded modules over the polynomial ring~$\Bbbk[W]$.
	Here we explain how it is related to resolutions of sheaves on the projective space~$\mathbb{P}(W)$.

	For a sheaf~$\mathscr{F}$ on~$\mathbb{P}(W)$ we can construct a graded~$\Bbbk[W]$-module applying the functor
	\[
	\alpha\colon \mathscr{F} \mapsto \bigoplus_{n\geqslant 0}\Gamma(\mathbb{P}(W),\mathscr{F}(n)).
	\]
	We will consider its image as an object of the category~$\mathscr{C}$ of {\it graded $\Bbbk[W]$-modules of finite type}, where objects are the same graded $\Bbbk[W]$-modules and morphisms are defined in the following way:
	\[
	\Hom_{\mathscr{C}}(M,N) = \varinjlim_{m_0}\Hom_{\Bbbk[W]\text{-grmod}}\left(\bigoplus_{m\geqslant m_0}M_m,\bigoplus_{m\geqslant m_0}N_m\right).
	\]
	The functor $\alpha\colon\Bbbk[W] \to\mathscr{C}$ is an equivalence of categories.
	It takes morphisms to morphisms and is exact.
	The opposite to~$\alpha$ functor~$\sim$ can be constructed as a generalization of the localization functor for the affine case. (There given a module $M$ over a ring $A$ one obtains a sheaf $\widetilde{M}$ over $\Spec A$).
	Detailed construction and the proof can be found in~\cite[lecture 7]{Mumford}.

	Nevertheless the functor $\alpha$ is not exact as a functor to the category of graded modules.
	We will construct below free resolutions over the polynomial ring $\Bbbk[W]$.
	But if we construct a free resolution for the module $M$ in the category $\Bbbk[W]$-grmod, obviously it remains a free resolution in the category $\mathscr{C}$, because the functor of localization of an abelian category by an abelien subcategory is exact.
	Therefore, applying the functor~$\sim$, we construct a resolution for~$\widetilde{M}$ in the category of sheaves on~$\mathbb{P}(W)$.

	This implies that by results of~\cite{Mumford} the following proposition holds.
	\begin{propos}[(\cite{Mumford})]
	\label{lem:local}
	Suppose that a module $M$ has a resolution in the category of graded modules of finite type over $\Bbbk[W]$:
	\[
	\xymatrix{
	  \ldots \ar[r] &
	  \bigoplus_{k}\Bbbk[W](k+1)\otimes R_{1,k} \ar[r] &
	  \bigoplus_{k}\Bbbk[W](k)\otimes R_{0,k} \ar[r] &
	  M \ar[r] &
	  0.
	}
	\]
	Then in the category of coherent sheaves on $\mathbb{P}(W)$ there is a resolution
	\[
	\xymatrix{
	  \ldots \ar[r] &
	  \bigoplus_{k}\mathscr{O}_{\mathbb{P}(W)}(-k-1)\otimes R_{1,k} \ar[r] &
	  \bigoplus_{k}\mathscr{O}_{\mathbb{P}(W)}(-k)\otimes R_{0,k} \ar[r] &
	  \widetilde{M} \ar[r] &
	  0.
	}
	\]
	\end{propos}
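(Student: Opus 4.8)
The plan is to apply the sheafification functor $\sim$ termwise to the displayed module resolution and to check just two things: that the resulting complex of coherent sheaves is still exact, and that $\sim$ sends each free term $\Bbbk[W](d)\otimes R$ to $\mathscr{O}_{\mathbb{P}(W)}(-d)\otimes R$. All the needed machinery has already been set up above: $\alpha$ is an equivalence between $\mathrm{Coh}(\mathbb{P}(W))$ and the category $\mathscr{C}$ of graded $\Bbbk[W]$-modules of finite type, with quasi-inverse $\sim$, and $\sim$ extends to all of $\Bbbk[W]$-grmod, where it coincides with the classical localization functor $M\mapsto\widetilde M$ (see~\cite[lecture~7]{Mumford}).

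First I would record that $\sim$, regarded as a functor $\Bbbk[W]\text{-grmod}\to\mathrm{Coh}(\mathbb{P}(W))$, is exact. It factors as the quotient functor $\Bbbk[W]\text{-grmod}\to\mathscr{C}$ followed by the equivalence $\mathscr{C}\xrightarrow{\ \sim\ }\mathrm{Coh}(\mathbb{P}(W))$. By construction $\mathscr{C}$ is the localization (Serre quotient) of $\Bbbk[W]\text{-grmod}$ by the thick subcategory of modules that vanish in all sufficiently large degrees, equivalently those supported at the irrelevant ideal; indeed the $\Hom$-formula defining $\mathscr{C}$ is exactly "morphisms up to truncation". The quotient functor onto a Serre quotient is exact and an equivalence is exact, hence $\sim$ is exact, and it therefore carries the displayed exact complex of graded modules to an exact complex of sheaves. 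This is where the finite-type hypothesis is used: each homological term $F_i=\bigoplus_k\Bbbk[W](\cdot)\otimes R_{i,k}$ is finitely generated, so only finitely many $R_{i,k}$ are nonzero for fixed $i$, the direct sums involved are finite, and $\widetilde{F_i}$ is coherent.

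It remains to identify the terms. Since $\sim$ is additive and $\Bbbk$-linear, it commutes with finite direct sums and with $-\otimes_\Bbbk R_{i,k}$ for the finite-dimensional vector spaces $R_{i,k}$, so the only computation required is $\widetilde{\Bbbk[W](d)}\cong\mathscr{O}_{\mathbb{P}(W)}(-d)$, the projective-space analogue of $\widetilde A\cong\mathscr{O}_{\Spec A}$, which is carried out in~\cite[lecture~7]{Mumford}. This gives $\widetilde{\Bbbk[W](k)\otimes R_{0,k}}\cong\mathscr{O}_{\mathbb{P}(W)}(-k)\otimes R_{0,k}$ and $\widetilde{\Bbbk[W](k+1)\otimes R_{1,k}}\cong\mathscr{O}_{\mathbb{P}(W)}(-k-1)\otimes R_{1,k}$, while the right-hand term becomes $\widetilde M$ by the very definition of $\sim$; assembling these, one reads off exactly the asserted resolution. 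There is no real obstacle beyond this bookkeeping: the only points that genuinely need care are the coherence (finite-type) hypothesis, which guarantees that the output lands in $\mathrm{Coh}(\mathbb{P}(W))$, and the exactness of $\sim$, for which it is essential that $\mathscr{C}$ is the Serre quotient category rather than an ad hoc construction.
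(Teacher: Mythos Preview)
Your argument is correct and matches the paper's approach exactly: the paper does not give a separate proof after the statement but rather explains in the preceding paragraph that $\sim$ factors through the Serre quotient $\mathscr{C}$ (the ``localization of an abelian category by an abelian subcategory is exact''), so a free resolution in $\Bbbk[W]$-grmod remains a resolution after applying~$\sim$. Your write-up simply makes this explicit and adds the termwise identification $\widetilde{\Bbbk[W](d)}\cong\mathscr{O}_{\mathbb{P}(W)}(-d)$, which the paper leaves implicit.
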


	We will consider objects of the form~$\mathscr{F}=\mathscr{O}_{\mathbb{P}(V)}(k)$ with support equal to the image of the quadratic Veronese embedding~$\mathbb{P}(V)\subset\mathbb{P}(\Sym^2V)$.
	To construct a resolution of the sheaf~$\mathscr{F}$, firstly we construct a resolution of the $\Bbbk[V]$-module $\alpha(\mathscr{F})$ and then using the fact that it remains a resolution under the action of the functor of localization by the subcategory of finite-dimensional objects, we apply the functor $\beta$ and obtain a resolution for the sheaf $\mathscr{F}$ in terms of the sheaves $\mathscr{O}_{\mathbb{P}(\Sym^2V)}(i)$.
	So we need to construct resolutions of modules of the form $\alpha(\mathscr{O}(k))=\bigoplus_{m\geqslant 0}\Sym^{2m+k}V^*$ over $\alpha(\mathscr{O})=\Sym^\bullet\Sym^2V^*$.

\subsection{Graphs in the lattice}
\label{sec:cubes}

	Fix a dominant weight~$\lambda$ of a reductive group~$G$.
	Then the isotypic component~$\mathcal{K}_{\lambda}$ of weight~$\lambda$ in the Koszul complex~$\mathcal{K} = \Lambda^\bullet V_\lambda^* \otimes A(X)$ admits the natural decomposition into the sum of subrepresentations~$\mathcal{K}_{\mu,n}$, where~$\mathcal{K}_{\mu,n}$ is isotypic component of weight~$\lambda$ in the representation~$(\Lambda^\bullet V_\lambda^*)_\mu\otimes V_{n\lambda}$ and~$(\Lambda^\bullet V_\lambda^*)_\mu$ is the isotypic component of weight~$\mu$ in the representation~$\Lambda^\bullet V_\lambda^*$ of the group~$G$.
	Applying the Schur Lemma, we can pass from complexes of representations of~$G$ to complexes of vector spaces.

	Let$K^\bullet$ be a complex of vector spaces.
	Then we can choose a basis in each space $K^i$ and compose a graph with vertices corresponding to vectors in these bases and arrows corresponding to non-zero differential maps.
	For example, the graph on the fig.~\ref{k -> k} corresponds to the complex $\{\Bbbk\to\Bbbk\}$ with non-zero differential map.
	It is always acyclic.
	\begin{figure}[h]
		\begin{center}
			\includegraphics{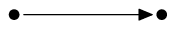}
		\end{center}
		\caption{}
		\label{k -> k}
	\end{figure}

	For example, consider the graph on fig.~\ref{k -> kk -> k}.
	\begin{figure}[h]
		\begin{center}
			\includegraphics{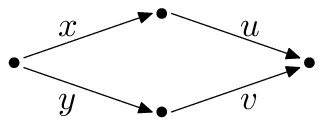}
		\end{center}
		\caption{}
		\label{k -> kk -> k}
	\end{figure}
	The condition of correspondence to a complex implies that $xy+uv=0$ and means that $d^2=0$.
	By a replacement of bases in one-dimensional vector spaces corresponding to vertices, we can put $x=y=u=1$, $v=-1$.
	Therefore the complex corresponding to this graph is unique up to isomorphism.
	Nevertheless not all graphs corresponding to complexes have the same property.
	For example, consider the graph on fig.~\ref{kk -> kk} corresponding to a complex of the form $\{\Bbbk^2\to\Bbbk^2\}$, where in the chosen basis the $2\times 2$-matrix of the differential~$d$ consists of non-zero elements.
	Rank of such matrix can be equal to~$1$ or~$2$.
	Therefore, this graph correspond to two different isomorphism classes of complexes.
	\begin{figure}[h]
		\begin{center}
			\includegraphics{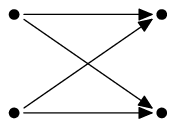}
		\end{center}
		\caption{}
		\label{kk -> kk}
	\end{figure}

	At the same time this graph is a full subgraph of the graph on fig.~\ref{k -> kxk -> k}. This one corresponds to one isomorphism class of complexes and therefore the cohomologies of such complex are uniquely defined.
	\begin{figure}[h]
		\begin{center}
			\includegraphics{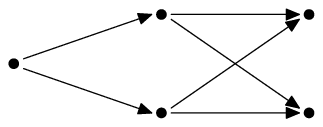}
		\end{center}
		\caption{}
		\label{k -> kxk -> k}
	\end{figure}

	Under the restrictions given by the conditions \STMF~and \SMF~the set of vertices of the graph corresponding to any isotypic component of the Koszul complex computing the syzygy spaces of the highest weight orbit is a subset of points of some lattice (namely, $\mathfrak{X}\oplus \mathbb{Z}$, where $\mathfrak{X}$ is the weight lattice of the reductive group) and arrows between vertices can be easily described using some additional combinatorics on Young diagrams.

\subsection{Truncated cubes}
\label{sec:cube}

	Suppose that the complex~$K^\bullet$ of vector spaces corresponds to a graph with vertices at vertices of the cube~$\{0,1\}^n$ and arrows correspond to edges of the cube and are directed to the vertex where the sum of coordinates is more.
	We will call such complexes {\it $n$-dimensional combinatorial cubes}.
	In accordance with results of~\cite{Net13} for any dimension there is a unique combinatorial cubes of this dimension up to isomorphism.
	In particular, for positive dimension the corresponding complex is always acyclic. For example, the complex corresponding to the graph on fig.~\ref{k -> kk -> k}.

	Consider the complex obtained from the combinatorial cube by the stupid $k$-th truncation (we always assume the truncation from below here).
	Then it corresponds to a full subgraph in the cube of vetices with the sum of coordinates not less than $k$.
	We will call Such a complex {\it truncated combinatorial cube}.
	According to results of~\cite{Net13} there exists a unique $k$-th truncated combinatorial $n$-cube.
	From here it is obvious that zero cohomologies of this complex equal $\Bbbk6{\oplus \binom{n-1}{k}}$ and that all other cohomologies vanish.

	Below we will need such types of complexes to describe isotypic components of the Koszul complex computing syzygies of quadratic Veronese embedding and syzygies of modules of the form~$\bigoplus_{n\geqslant 0}\Sym^{2n+k}\Sym^2 V^*$ over $\Sym^\bullet\Sym^2 V^*$.

\section{Computation of syzygies}
\label{sec:main}
	Let us introduce some notations needed below in this Section.

	Let~$\lambda$ be a Young diagram.
	\begin{definition}
		\label{def:C}
		Denote by~$C(\lambda)$ the set of such~$1\leqslant l\leqslant k$ that
		\begin{itemize}
			\item $a_l>b_l$, i.\,e.~$l$-th hook has width more than its height;
			\item $a_{l+1}\leqslant b_{l+1}+1$ or~$l=k$,~i.\,e.~the following hook has width at most its height plus one or there is no a following hook,~i.\,e.~~$l=k$.
		\end{itemize}
	\end{definition}

	We will be interested in values of~$C$ on diagrams~$\lambda$, where any hook is not more wide than high.
	Therefore the condition~$l\in C$ can be reformulated in the following way:
	\begin{itemize}
		\item diagram~$\lambda$ contains the box~$(l+b_l,l-1)$,
		\item diagram~$\lambda$ does not contain the box~$(l+b_l,l)$.
	\end{itemize}

	Figure~\ref{C example} contains an example of a diagram
	\[
		\lambda=(8,8,6,6,4,3)=(8,7,4,3|2,4,5,6).
	\]
	\begin{figure}[h]
		\begin{center}
			\includegraphics{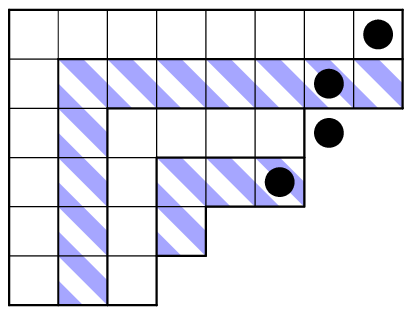}
		\end{center}
		\caption{}
		\label{C example}
	\end{figure}
	Here~$C(\lambda)=\{2,4\}$.
	In terms of the reformulation for each~$l\in\{1,2,3,4\}$ the box~$(l+b_l,l-1)$ contains a bold dot.
	Element~$l$ is included into~$C(\lambda)$ if the corresponding dot is in the diagram and the following is not.

	\begin{definition}
		Denote by~$Z(w,a)$ the set of Young diagrams~$\theta$ with some of boxes shaded, where
		\begin{itemize}
			\item there are~$2w$ not shaded boxes, they form a Young diagram~$\theta_0$, each it main hook has width equal to height plus one;
			\item there are~$a$ shaded boxes,
			\item there are no two shaded boxes in one column.
		\end{itemize}
		Denote by $\omega(\theta)$ the diagram of the same form as $\theta$ without shaded boxes.
	\end{definition}

	Figure~\ref{Z example} contains an example of an element~$\theta\in Z(4,4)$.
	\begin{figure}[h]
		\begin{center}
			\includegraphics{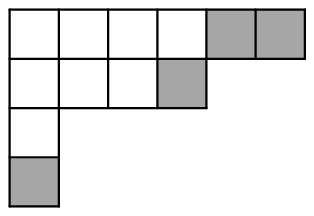}
		\end{center}
		\caption{}
		\label{Z example}
	\end{figure}

	Denote by~$Z_n(w,a)$ the subset of elements in~$Z(w,a)$ with at most~$n$ columns.

	\begin{lemma}
		\label{Z and modules}
		There is an isomorphism
		\[
			\Lambda^w\Sym^2V^*\otimes\Sym^aV^* = \bigoplus_{\theta\in Z_{n}(w,a)}\Sigma_\theta V^*,
		\]
		where~$n=\dim V$.
	\end{lemma}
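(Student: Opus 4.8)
The plan is to compute the character of the left-hand side via the Cauchy-type decomposition of $\Lambda^w\Sym^2 V^*$ and then match terms with the combinatorial description of $Z_n(w,a)$. First I would recall the classical plethysm $\Lambda^w\Sym^2 V^* = \bigoplus_\mu \Sigma_\mu V^*$, where $\mu$ runs over all partitions of $2w$ whose main hooks $(a_i,b_i)$ satisfy $a_i = b_i + 1$ for every $i$; equivalently $\mu$ has distinct-by-one "staircase" main hooks, i.e.\ the Frobenius coordinates of $\mu$ are $(p_1,\dots,p_k \mid p_1+1,\dots,p_k+1)$. (This is the transpose of the well-known fact that $\Lambda^w\Sym^2$ of a vector space decomposes into Schur functors indexed by partitions with all even columns, after conjugation; I would cite \cite{Fulton} or \cite{FH} for the precise statement.) This identifies $\theta_0$ in the definition of $Z(w,a)$ with exactly the index set of $\Lambda^w\Sym^2 V^*$.

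Next I would handle the tensor factor $\Sym^a V^*$ using Pieri's rule: $\Sigma_\mu V^* \otimes \Sym^a V^* = \bigoplus_\nu \Sigma_\nu V^*$, where $\nu$ ranges over partitions obtained from $\mu$ by adding $a$ boxes, no two in the same column — that is, adding a horizontal strip of size $a$. The key observation is that "add a horizontal strip of $a$ boxes to $\theta_0$" is precisely the combinatorial data recorded by an element $\theta\in Z(w,a)$: the $a$ shaded boxes are exactly the added boxes, and the condition "no two shaded boxes in one column" is exactly the horizontal-strip condition in Pieri's rule. So the map $\theta \mapsto (\omega(\theta) = \mu, \ \nu = \text{underlying diagram of }\theta)$ is a bijection between $Z(w,a)$ and the set of pairs appearing in $\bigoplus_\mu \bigoplus_{\nu = \mu + \text{horiz.\ strip}} \Sigma_\nu V^*$. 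Summing over this bijection gives $\Lambda^w\Sym^2 V^*\otimes\Sym^a V^* = \bigoplus_{\theta\in Z(w,a)}\Sigma_{\omega'(\theta)}V^*$, but I should be careful: the index appearing on the right of the lemma is $\theta$ itself (the shaded diagram), and by definition $\Sigma_\theta V^*$ means $\Sigma_\nu V^*$ for $\nu$ the underlying shape — this convention needs to be stated, or I interpret $\Sigma_\theta$ as $\Sigma$ of the full (shaded + unshaded) shape.

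Finally I would account for the subscript $n = \dim V$: a Schur functor $\Sigma_\nu V^*$ vanishes precisely when $\nu$ has more than $n$ rows — but here we are told the answer involves $Z_n(w,a)$, the diagrams with at most $n$ \emph{columns}. This discrepancy is because we work with $V^*$ and the relevant bound transposes, or more likely because the paper's Young diagrams are drawn "to the right and below" so that what I am calling columns are the paper's rows; in any case the truncation condition matches exactly one of "at most $n$ rows / columns" and I would simply verify which, using that $\dim V = n$ forces every surviving $\Sigma_\nu V^*$ to have $\nu$ fitting in the appropriate $n$-strip. The main obstacle is bookkeeping rather than mathematical depth: getting the transpose/conjugation conventions consistent between the plethysm formula, Pieri's rule, the passage from $V$ to $V^*$, and the paper's drawing convention for diagrams and Frobenius notation. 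Once those are pinned down, the proof is a one-line combination of two standard representation-theoretic identities.
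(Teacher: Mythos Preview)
Your approach is essentially identical to the paper's: first invoke the classical plethysm expressing $\Lambda^w\Sym^2V^*$ as a sum of Schur functors indexed by diagrams whose main hooks satisfy $a_i=b_i+1$ (the paper cites Macdonald), then apply Pieri's rule for the tensor with $\Sym^aV^*$ and identify the resulting horizontal strips with the shaded boxes. The only slips are notational---your displayed Frobenius symbol $(p_1,\dots,p_k\mid p_1{+}1,\dots,p_k{+}1)$ contradicts your own condition $a_i=b_i+1$, and the ``even columns'' aside describes $\Sym^w\Sym^2$ rather than $\Lambda^w\Sym^2$---but you already flag these as bookkeeping, and they do not affect the argument.
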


	\begin{proof}
		From well known formula~(see~\cite{SFHP}) we have
		\[
			\Lambda^w\Sym^2V^* = \bigoplus_{\lambda=(a_1,\ldots,a_n|a_n-1,\ldots,a_1-1) \atop {a_1>a_2>\ldots>a_n \atop a_1+\ldots+a_n=w}}\Sigma_\lambda V^*.
		\]
		Here the sum is taken over the set~$Z(w,0)$.
		It remains to apply the Pieri formula~(see~\cite{Fulton}) tensoring with~$\Sym^aV^*$ and shading added boxes.
	\end{proof}

	The main goal of this Section is to describe isotypic components of the Koszul complex
	\begin{multline}
		\label{main Ver2 complex}
		\ldots \to \Lambda^{p+1}\Sym^2V^*\otimes\Sym^{a+q-2}V^* \to \\
		\to \Lambda^p\Sym^2V^*\otimes\Sym^{a+q}V^*\to \\
		\to \Lambda^{p-1}\Sym^2V^*\otimes\Sym^{a+q+2}V^*\to\ldots,
	\end{multline}
	which computes syzygies of $\Sym^\bullet\Sym^2V^*$-module $\bigoplus_{m\geqslant 0}\Sym^{2m+a}V^*$.

	Denote by~$\operatorname{pr}_\omega$ the projection to isotypic component corresponding to the Young diagram~$\omega$.
	\begin{lemma}
		\label{Ver2 lemma}
		Let~$\omega=(a_1,\ldots,a_k|b_k,\ldots,b_1)$ be a Young diagram.
		Suppose one can shade some of its boxes and obtain a diagram~$\theta \in Z_n(w,a)$.
		Then there is an isomorphism of complexes
		\[
			\mathrm{pr}_\omega(K^\bullet) = V_\lambda\otimes\Lambda^{\geqslant s}\Bbbk^{C(\omega)}(r),
		\]
		where~$s=\frac{a}{2}-\sum_{i=1}^k\frac{|a_i-b_i-1|}{2}$ and~$r=\sum_{i=1}^k\frac{a_i-b_i+1}{2}-|C(\omega)|$.
		Otherwise~$\mathrm{pr}_\omega(K^\bullet)=0.$
	\end{lemma}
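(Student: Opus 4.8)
The plan is to pin down the complex $\operatorname{pr}_\omega(K^\bullet)$ in two stages: first to describe it explicitly as a complex of $\Bbbk$-vector spaces equipped with a combinatorial basis, and then to recognise it as a shifted, truncated combinatorial cube in the sense of~\S\ref{sec:cubes}--\S\ref{sec:cube}. The weight of the quadratic Veronese is \SMF{} (because $\Lambda^w\Sym^2V^*$ is multiplicity free, by the formula quoted in the proof of Lemma~\ref{Z and modules}) and \STMF{} (because $V_\mu\otimes\Sym^{2m}V^*$ is multiplicity free, by the Pieri rule), so by~\S\ref{sec:cubes} applying $\operatorname{pr}_\omega$ and Schur's lemma turns $K^\bullet$ into a complex of $\Bbbk$-vector spaces with no loss of information. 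By Lemma~\ref{Z and modules} the summand coming from $\Lambda^p\Sym^2V^*\otimes\Sym^{m}V^*$ acquires a canonical basis indexed by the shadings of $\omega$ that realise it as an element of $Z_n(p,m)$ --- equivalently (Pieri) by the presentations $\omega=\theta_0\cup(\text{horizontal strip of size }m)$ with $\theta_0$ a doubled staircase (every main hook of width one more than its height) occurring in $\Lambda^p\Sym^2V^*$. A basis vector of $\operatorname{pr}_\omega(K^\bullet)$ is thus a pair $(\theta_0,\text{shaded strip})$, and moving one step along $K^\bullet$ transfers two boxes of $\theta_0$ into the shaded strip, $\theta_0$ remaining a doubled staircase.

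The combinatorial heart is the reparametrisation of these shadings. Working main hook by main hook, one shows that a valid shading is rigid: inside each main hook the shaded boxes are forced once its outer boundary is fixed, and the only genuine freedom is an independent binary choice at each $l\in C(\omega)$ --- namely whether or not to shade the boundary box isolated by the reformulation of Definition~\ref{def:C} --- subject to a single global count forcing at least $s$ of these choices to be ``shade''. Hence, in each homological degree, the valid shadings sitting there are in bijection with the subsets $T\subseteq C(\omega)$ of one fixed size, with $|T|$ an affine function of the degree. Book-keeping of box counts produces the stated constants: each main hook with $a_i\ne b_i+1$ consumes $|a_i-b_i-1|$ forced shaded boxes and $a$ is the least admissible total number of shaded boxes, whence $s=\tfrac a2-\sum_{i=1}^k\tfrac{|a_i-b_i-1|}{2}$; the ``baseline'' exterior degree $\sum_{i=1}^k\tfrac{a_i-b_i+1}{2}$ minus $|C(\omega)|$ places the cube with the shift $r$. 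If $\omega$ admits no valid shading at all (for instance if $\mathrm{wt}(\omega)\not\equiv a\pmod 2$), every term is zero and $\operatorname{pr}_\omega(K^\bullet)=0$, which is the second alternative.

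It remains to identify the differential. By Schur's lemma together with \SMF{} and \STMF, every matrix entry of the differential between two basis vectors lies in a space of dimension at most one, so the differential is determined by its incidence pattern. The Koszul differential moves two boxes of $\theta_0$ into the shaded strip, and by the reformulation of Definition~\ref{def:C} each such move flips exactly one of the binary choices of the previous step; therefore the graph attached to $\operatorname{pr}_\omega(K^\bullet)$ (in the sense of~\S\ref{sec:cubes}) is a truncation from below of the $|C(\omega)|$-dimensional cube graph. By the uniqueness of truncated combinatorial cubes (\S\ref{sec:cube}, \cite{Net13}) this both guarantees that the entries claimed to be nonzero are genuinely nonzero and identifies $\operatorname{pr}_\omega(K^\bullet)$, up to the shift by $r$, with $\Lambda^{\geqslant s}\Bbbk^{C(\omega)}$ tensored with the irreducible of the statement; together with the previous step this is exactly the asserted isomorphism.

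The main obstacle is the rigidity and book-keeping of the middle step: proving that every shading of $\omega$ with doubled-staircase complement is governed precisely by independent toggles over the sites of $C(\omega)$, and then extracting the exact affine relation between $|T|$ and the homological degree, hence the precise values of $s$ and $r$. Once that is in place, the first step is immediate from Lemma~\ref{Z and modules} and the last step is a direct appeal to the cube uniqueness of~\cite{Net13}.
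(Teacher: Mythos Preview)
Your plan follows the paper's argument essentially step for step: reduce to a complex of vector spaces via Schur's lemma using the \SMF{} and \STMF{} properties, index a basis of each term by the shadings of $\omega$ lying in $Z_n(\,\cdot\,,\,\cdot\,)$ via Lemma~\ref{Z and modules}, show these shadings are parametrised by subsets of $C(\omega)$, read off $s$ and $r$ from the box count, and then invoke the uniqueness of truncated combinatorial cubes from~\cite{Net13}.

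There is, however, one genuine gap in your final step. You write that the uniqueness result of~\cite{Net13} ``guarantees that the entries claimed to be nonzero are genuinely nonzero.'' This is circular: the uniqueness statement in~\S\ref{sec:cube} applies to a complex whose associated graph \emph{is} the truncated cube, i.e.\ once you already know which matrix entries are nonzero. If some of the potential edges were zero, the graph would be a proper subgraph of the cube and uniqueness would say nothing. So nonvanishing must be established \emph{before} you invoke~\cite{Net13}, not after. The paper does this directly: it writes the Koszul differential as the composition of the comultiplication $\iota\colon\Lambda^{p+1}\Sym^2V^*\to\Lambda^p\Sym^2V^*\otimes\Sym^2V^*$ with the multiplication $\pi\colon\Sym^2V^*\otimes\Sym^{a+q}V^*\to\Sym^{a+q+2}V^*$, restricts to the summand $\Sigma_\theta V^*$, and checks that the component landing in $\Sigma_\eta V^*$ is nonzero exactly when $\eta$ is obtained from $\theta$ by shading two further boxes. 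That verification (brief as it is) is what supplies the edges of the cube graph; only then does Lemma~3.5 of~\cite{Net13} finish the identification. You should replace the appeal to uniqueness for nonvanishing with this direct argument.
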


	\begin{proof}
		Let us check the conditions necessary for~$\mathrm{pr}_\omega(K^\bullet)\ne0$.
		By Lemma~\ref{Z and modules} there is a bijection between irreducible~$G$-modules in~$\Lambda^w\Sym^2V^*\otimes\Sym^{a}V^*$ and the set~$Z_{n}(w,a)$.
		Therefore the complex~$\mathrm{pr}_\omega(K^\bullet)$ is isomorphic to
		\begin{equation}
			\ldots\to
			\bigoplus_{\theta\in Z_{n}(w+1,a-2)}\Sigma_\theta V^* \to
			\bigoplus_{\theta\in Z_{n}(w,a)}\Sigma_\theta V^* \to
			\bigoplus_{\theta\in Z_{n}(w+1,a-2)}\Sigma_\theta V^* \to
			\ldots
		\end{equation}

		Clearly,~$\mathrm{pr}_\omega(K^\bullet)$ is a subcomplex in~\eqref{main Ver2 complex} and consists of modules~$\Sigma_\theta W^*$ with~$\omega(\theta)=\omega$.
		It remains to check that this subcomplex is a truncated combinatorial cube.
		There is a graded vector space~$\mathcal{V}^\bullet$ such that~$\mathrm{pr}_\omega(K^\bullet)\simeq\Sigma_{\omega}W^*\otimes\mathcal{V}^\bullet$.
		Since the representation~$\Sigma_\omega W^*$ of~$G$ is irreducible, from the Schur Lemma~(see~\cite{FH}) we have~$\End(\Sigma_\omega W^*)=\Bbbk$.
		Hence,
		\[
			d\in\End_G(\Sigma_\omega W^*\otimes\mathcal{V}^\bullet)=\End(\mathcal{V}^\bullet).
		\]

		Now let us prove that the complex~$\mathrm{pr}_\omega(K^\bullet)$ is a truncated combinatorial cube.
		We prove it by two steps.
		At first we choose a basis in~$\mathcal{V}^\bullet$.
		At second we find all non-zero matrix elements of~$d$ in this basis.
		After it remains only to apply Lemma~3.5 from~\cite{Net13}.

		Take any diagram $\omega$.
		If we can not shade~$\omega$ to obtain an element of $Z_{n}(w,a)$, then $\mathop{\mathrm{pr}}_\omega(K^\bullet)=0$.
		We assert hereafter that we can shade the diagram $\omega$ in this way.

		For any shaded diagram $\theta\in Z_n(w,a)$ its non-shaded part consists of hooks of the form $(k|k-1)$.
		All these diagrams differ only in shading of some pairs of boxes.
		Namely, these pairs are~$(l-1,l-1+b_l)$ and~$(l+b_l,l-1)$ for~$l\in C(\omega)$.
		Therefore, shaded diagrams of the form $\omega$ are in bijection with subsets of $C(\omega)$,~i.\,e.~with a basis of the exterior algebra $\Lambda^\bullet E$ of the vector space $E$ spanned by vectors $\xi_1,\ldots,\xi_k$ indexed by elements of $C(\omega)$.
		A vector $\xi_{i_1}\wedge\ldots\wedge\xi_{i_l}$ with $1\leqslant i_1<\ldots< i_l\leqslant k$ corresponds to the diagram $\theta$ shaded in the following way.
		At first, for each main hook in $\omega$ let us shade all boxes except the maximal subhook of the form $(m|m-1)$.
		After for each $i\in\{i_1,\ldots,i_l\}$ in the hook number $i$ let us shade a pair of boxes such that non-shaded boxes form again a hook of the form $(m|m-1)$.

		Clearly, if in $\omega$ we shade maximal possible number of boxes such that each main hook in non-shaded part have the form $(m|m-1)$, we shade $\sum_{i=1}^{k}(a_i-b_i+1)$ boxes.
		Therefore we should shift the degree of the complex by $\frac12\sum_{i=1}^{k}(a_i-b_i+1)-|C(\omega)|$.
		Minimal number of shaded boxes in diagram equals $\sum_{i=1}^{k}|a_i-b_i-1|$.
		In the same time the complex contain only diagrams $\theta$, where at least $a$ boxes are shaded.
		Therefore the exterior power $\Lambda^sE$ contributes to the complex if and only if $s\geqslant \frac{a}{2}-\frac12\sum_{i=1}^{k}|a_i-b_i-1|$.

		The differential $d$ in the Koszul complex~\eqref{main Ver2 complex} is a composition of the comultiplication map
		\[
			\Lambda^{p+1}\Sym^2V^*\otimes\Sym^{a+q}V^*
			\mathop{\longrightarrow}
			\limits^{\iota}
			\Lambda^{p}\Sym^2V^*\otimes\Sym^2V^*\otimes\Sym^{a+q}V^*
		\]
		in the exterior algebra and the multiplication map
		\[
			\Lambda^{p}\Sym^2V^*\otimes\Sym^2V^*\otimes\Sym^{a+q}V^*
			\mathop{\longrightarrow}\limits^{\pi}
			\Lambda^p\Sym^2V^*\otimes\Sym^{a+p+2}V^*
		\]
		in the projective coordinate algebra.
		Take $\theta\in Z_n(p+1,a)$.
		By Lemma~\ref{Z and modules} it corresponds to an irreducible $G$-subrepresentaion in $\Lambda^{p+1}\Sym^2V^*\otimes\Sym^{a}V^*$.
		The restriction of the comultiplication map onto the summand corresponding to $\Sigma_\theta V^*$ can be decomposed into the sum $\Sigma_\theta V^*\to\Sigma_\nu V^*\otimes\Sym^2V^*$, where $\nu\in Z_{n}(p,a)$ is obtained from $\theta$ by removement of a pair of boxes from different columns.
		Note that the composition of the comultiplication map restricted to the summand corresponding~$\theta$ and the multiplication map does not vanish under projection to the summand corresponding~$\eta\in Z_{n}(p,a+2)$ if and only if $\eta$ can be obtained from $\theta$ by shading of two boxes, because diagrams must have the same form, because all the map are $G$-equivariant.
		So, if a diagram $\theta$ corresponds to a vector $\xi_{i_1}\wedge\ldots\wedge\xi_{i_l}$, then differential has non-zero restriction maps from $\Sigma_\theta V^*$ to all $\Sigma_\eta V^*$, where diagrams $\eta$ correspond to non-zero vectors $\pm\xi_m\wedge\xi_{i_1}\wedge\ldots\wedge\xi_{i_l}$ for $m\in C(\omega)$.
		From~\cite[Lemma 3.5]{Net13} $\mathrm{pr}(K_\bullet)$ is a truncated combinatorial cube.
		This gives the required isomorphism of complexes.
	\end{proof}

	\begin{propos}
		\label{0cubes Ver2}
		The isotypic component of weight~$\omega$ in the complex~\eqref{main Ver2 complex} is a combinatorial~$0$-cube if and only if the diagram~$\omega$ is symmetric,~i.\,e.~$\omega=\omega'$, and parity of~$\mathrm{wt}(\omega)$ is the same as parity of~$a$.
	\end{propos}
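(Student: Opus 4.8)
The plan is to read the statement off from Lemma~\ref{Ver2 lemma}. By that lemma, for any Young diagram $\omega$ the isotypic component $\mathrm{pr}_\omega(K^\bullet)$ is either zero, or — after discarding the irreducible tensor factor $V_\lambda$ and the homological shift $(r)$ — it equals the truncated combinatorial cube $\Lambda^{\geqslant s}\Bbbk^{C(\omega)}$ with $s=\frac a2-\sum_i\frac{|a_i-b_i-1|}{2}$, and it is nonzero only when $s\in\mathbb Z$ and $s\leqslant|C(\omega)|$. A combinatorial $0$-cube is, by definition, a complex isomorphic to one copy of $\Bbbk$ in a single homological degree, i.e. one whose underlying graded vector space is $1$-dimensional. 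Since $\dim_{\Bbbk}\Lambda^{\geqslant s}\Bbbk^{c}=\sum_{j\geqslant\max(0,s)}\binom cj$, this dimension equals $1$ precisely when $\max(0,s)=c:=|C(\omega)|$; taking into account the non-vanishing constraint $s\leqslant c$, this leaves exactly the case $C(\omega)=\varnothing$ with vacuous truncation $s\leqslant0$ (the borderline alternative $s=c\geqslant1$ must be dispatched, see below). So it remains to identify ``$C(\omega)=\varnothing$, $s\leqslant0$, $\mathrm{pr}_\omega(K^\bullet)\neq0$'' with ``$\omega=\omega'$ and $\mathrm{wt}(\omega)\equiv a\pmod 2$''.

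For the first condition: $\mathrm{pr}_\omega(K^\bullet)\neq 0$ forces $\omega$ to admit a shading of the kind used in Lemma~\ref{Ver2 lemma}, so each main hook $(a_i|b_i)$ must be shadeable to one of the form $(m|m-1)$ with no two shaded boxes in the column through its vertex; this requires $b_i\leqslant m\leqslant\min(a_i,b_i+1)$, hence $b_i\leqslant a_i$ for every $i$. Now suppose $C(\omega)=\varnothing$ but $a_l>b_l$ for some $l$; choosing the largest such $l$ we get either $l=l(\omega)$ or $a_{l+1}\leqslant b_{l+1}\leqslant b_{l+1}+1$, so by Definition~\ref{def:C} $l\in C(\omega)$ — a contradiction. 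Thus $a_i\leqslant b_i$ for all $i$, and combined with $b_i\leqslant a_i$ this gives $a_i=b_i$, i.e. $\omega=\omega'$. Conversely a symmetric $\omega$ has no hook with $a_l>b_l$, so $C(\omega)=\varnothing$.

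Finally the parity: for symmetric $\omega$ one has $|a_i-b_i-1|=1$ for each of the $l(\omega)$ main hooks, so $s=\tfrac12\bigl(a-l(\omega)\bigr)$; hence $s\in\mathbb Z$ (necessary for $\mathrm{pr}_\omega(K^\bullet)\neq0$) is equivalent to $a\equiv l(\omega)\pmod2$, and since $\mathrm{wt}(\omega)=\sum_i(a_i+b_i-1)=2\sum_i a_i-l(\omega)\equiv l(\omega)\pmod2$, it is the same as $a\equiv\mathrm{wt}(\omega)\pmod2$; moreover $s\leqslant 0$ is then automatic, because $C(\omega)=\varnothing$ already forces $s\leqslant|C(\omega)|=0$. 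Together these give both implications. The step where I expect the real work (and the one requiring care) is the first reduction — confirming that among the truncated cubes produced by Lemma~\ref{Ver2 lemma} only those with $C(\omega)=\varnothing$ are one-dimensional in the complex under consideration, i.e. ruling out the borderline case $s=|C(\omega)|\geqslant1$; this I would handle by tracking which shadings $\theta$ of shape $\omega$ actually occur (the surviving wedge-degrees are cut out from below by positivity of the relevant $\Sym$-power), using the estimate $\mathrm{wt}(\omega)\geqslant\sum_i|a_i-b_i-1|+2|C(\omega)|$ to see that when $|C(\omega)|\geqslant1$ at least two consecutive wedge-degrees remain, so the underlying dimension is $\geqslant 2$. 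Everything else is the elementary combinatorics of $C(\omega)$ and of admissible shadings recalled above.
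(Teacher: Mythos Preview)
Your core argument is the same as the paper's: invoke Lemma~\ref{Ver2 lemma}, identify the condition $C(\omega)=\varnothing$ with $\omega=\omega'$ via the ``choose the maximal $l$ with $a_l\neq b_l$'' trick, and read off the parity from $s\in\mathbb Z$. That part is fine and matches the paper essentially line for line.

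The place you diverge is your treatment of the ``borderline'' case $s=|C(\omega)|\geqslant 1$. Here you are fighting a phantom. In the paper's usage, ``combinatorial $0$-cube'' means the cube of dimension $|C(\omega)|=0$, not ``any one-term complex''; an $s$-truncated $c$-cube with $s=c\geqslant 1$ is still a \emph{truncated} cube and is accounted for in the second row of~\eqref{RpqVer2sheaf}, not the first. So the paper simply does not need to exclude this case in the proof of Proposition~\ref{0cubes Ver2}. More importantly, your proposed exclusion is wrong: the case genuinely occurs. Take $a=2$ and $\omega=(2)=(2\,|\,1)$. Then $a_1=2>b_1=1$ and $l=k=1$, so $C(\omega)=\{1\}$; also $|a_1-b_1-1|=0$, hence $s=\tfrac{a}{2}-0=1=|C(\omega)|$. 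The isotypic component is the single term $\Sym^2V^*$ sitting at $(p,q)=(0,0)$ --- a $1$-truncated $1$-cube --- and your estimate $\mathrm{wt}(\omega)\geqslant\sum_i|a_i-b_i-1|+2|C(\omega)|$ reads $2\geqslant 2$, which does not force a second wedge-degree. So drop the borderline discussion; once you read ``$0$-cube'' as ``$|C(\omega)|=0$'' (which is how the proposition is used in the proof of Theorem~\ref{main Ver2 sheaf}), your argument and the paper's coincide.
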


	\begin{proof}
		Note that in the complex~\eqref{main Ver2 complex} each diagram~$\theta$ has~$a+2k$ shaded boxes for different~$k$.
		Each main hook of a diagram~$\theta$ in the complex can have at most one shaded box at the bottom.
		Therefore height of any hook does not exceed its width.
		Let~$\theta=(a_1,\ldots,a_k|b_k,\ldots,b_1)$.
		Suppose that the diagram~$\theta$ is not symmetric.
		Let~$l$ be maximal such that~$a_l\ne b_l$.
		This means that~$a_l>b_l$.
		But either~$k=l$ or~$a_{l+1}=b_{l+1}<b_l$.
		Therefore the box~$(b_l+l,l-1)$ is contained in~$\theta$, hence~$l\in C(\theta)$.
		Consequently, the isotypic component of weight~$\theta$ is not a combinatorial~$0$-cube.
		
		This implies that each hook in a diagram corresponding to a $0$-cube is the symmetric.
		This means that each main hook contains one shaded box at the bottom.
		The number of main hooks coincides with the diagonal length. Its parity coincides with parity of~$a$.
		It is also obvious that for a diagram~$\theta$ satisfying these conditions the corresponding isotypic component is a~$0$-cube.
	\end{proof}

	\begin{lemma}
		\label{no truncated cubes in Ver2,a=0}
		Suppose~$a\leqslant 0$.
		Then the complex~$K^\bullet$ does not contain an isotypic component beeing a $k$-truncated combinatorial cube for~$k>0$.
	\end{lemma}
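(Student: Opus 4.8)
The plan is to reduce the statement to the combinatorial analysis of the isotypic components carried out in Lemma~\ref{Ver2 lemma}, and then to exhibit an inequality on the relevant shift parameter that becomes vacuous precisely when $a\leqslant 0$. Recall from Lemma~\ref{Ver2 lemma} that a $k$-truncated combinatorial cube with $k>0$ can only arise as $\mathrm{pr}_\omega(K^\bullet)$ for a diagram $\omega=(a_1,\ldots,a_k|b_k,\ldots,b_1)$ with all hooks satisfying $b_i\leqslant a_i$, and that in this case the complex is $V_\lambda\otimes\Lambda^{\geqslant s}\Bbbk^{C(\omega)}$ up to a degree shift, with $s=\frac{a}{2}-\sum_{i=1}^k\frac{|a_i-b_i-1|}{2}$. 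The truncation index of this exterior-power complex is exactly $s$ (when $s\leqslant 0$ the whole exterior algebra appears and the complex is a full, hence acyclic, combinatorial cube; it is genuinely $k$-truncated with $k>0$ only when $s>0$). So the lemma amounts to showing: if $a\leqslant 0$, then $s\leqslant 0$ for every admissible $\omega$ — equivalently, no isotypic component is a $k$-truncated cube with $k>0$.

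First I would observe that for every main hook one has $|a_i-b_i-1|\geqslant 0$, so $\sum_{i=1}^k|a_i-b_i-1|\geqslant 0$, and therefore
\[
	s=\frac{a}{2}-\frac12\sum_{i=1}^k|a_i-b_i-1|\leqslant\frac{a}{2}\leqslant 0
\]
whenever $a\leqslant 0$. This already gives $\Lambda^{\geqslant s}\Bbbk^{C(\omega)}=\Lambda^{\bullet}\Bbbk^{C(\omega)}$, i.e. the component is an untruncated (possibly trivial) combinatorial cube, never a $k$-truncated one with $k>0$. The only subtlety is to make sure the degenerate cases are handled: if $C(\omega)=\varnothing$ the component is a $0$-cube (a single copy of $V_\lambda$, in homological degree $r$), which is consistent with the claim; and if $s<0$ one still gets a genuine combinatorial cube of dimension $|C(\omega)|$, which is acyclic and again not a positively-truncated cube. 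I would also note, to be thorough, that Lemma~\ref{Ver2 lemma} only produces a nonzero component when $\omega$ can be shaded to some $\theta\in Z_n(w,a)$; for $a<0$ there are no shaded diagrams at all and $K^\bullet$ is itself obtained from the $a=0$ complex by a grading shift (as in the Remark after Theorem~\ref{main Ver2 sheaf}), so it suffices to treat $a=0$ literally, where the inequality above is an equality-or-strict-inequality as needed.

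The step that requires the most care is verifying that the parameter $s$ computed in Lemma~\ref{Ver2 lemma} really is the truncation index of the corresponding truncated-cube complex in the sense of \S\ref{sec:cube}, i.e. that $\Lambda^{\geqslant s}\Bbbk^{C(\omega)}(r)$ with $s\leqslant 0$ is, as a complex, the \emph{full} combinatorial $|C(\omega)|$-cube and hence has vanishing cohomology except possibly the description in \S\ref{sec:cube}; this is exactly the content of the discussion of truncated combinatorial cubes and \cite[Lemma~3.5]{Net13}, so no new work is needed, only the bookkeeping that "$s\leqslant 0$" corresponds to "truncation from below at a non-positive level" $=$ "no truncation". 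Once that identification is in place, the proof is just the one-line inequality $s\leqslant a/2\leqslant 0$ together with the remark reducing $a<0$ to $a=0$.
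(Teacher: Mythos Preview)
Your proof is correct and follows essentially the same route as the paper: invoke Lemma~\ref{Ver2 lemma} to identify $\mathrm{pr}_\omega(K^\bullet)$ with $V_\omega\otimes\Lambda^{\geqslant s}\Bbbk^{C(\omega)}(r)$, then use the trivial bound $\sum_i|a_i-b_i-1|\geqslant 0$ to conclude $s\leqslant a/2\leqslant 0$, so the truncation is vacuous and the component is a full combinatorial cube. The paper's proof is the same one-line inequality (stated as $s\leqslant a\leqslant 0$, which appears to be a minor slip for $s\leqslant a/2\leqslant 0$); your additional remarks on degenerate cases and on reducing $a<0$ to $a=0$ are harmless elaborations but not needed for the argument.
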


	\begin{proof}
		By Lemma~\ref{Ver2 lemma} for~$a\leqslant 0$ the isotypic component of weight~$\omega$ is isomorphic to~$V_\omega\otimes\Lambda^{\geqslant s}E(r)$ for some~$k$, for~$s\leqslant a\leqslant 0$ and for some vector space~$E$.
		Therefore~$V_\omega\otimes\Lambda^{\geqslant s}E(r)=V_\omega\otimes\Lambda^{\bullet}E(r)$.
	\end{proof}

	Now let us prove Theorem~\ref{main Ver2 sheaf}.

	\begin{proof}
		By Lemma~\ref{lem:local} the syzygy spaces~$R_{p,q}^a$ are calculated by the Koszul complex~\eqref{main Ver2 complex}.
		By Lemma~\ref{Ver2 lemma} all the isotypic components of this complex are truncated combinatorial cubes.
		Combinatorial~$0$-cubes of complex~\eqref{main Ver2 complex} are described by Proposition~\ref{0cubes Ver2} and correspond to the first row in~\eqref{RpqVer2sheaf}.
		Now let us decribe truncated combinatorial cubes.
		
		Each truncated combinatorial cube has the component of minimal degree at degree zero.
		Take a diagram~$\omega$.
		It must have~$\sum_{i=1}^k|a_i-b_i-1|$ shaded boxes.
		The diagram corresponding to a representaion lying in degree zero has totally~$a$ shaded boxes.
		The set~$C(\omega)$ corresponds to the hooks where we can shade some other boxes.
		Except the boxes that must be shaded, there are also~$s=\frac12\left(a-\sum_{i=1}^k|a_i-b_i-1|\right)$ hooks, where we can shade some additional boxes.
		Note that if the number~$s$ is not integer, then the complex has no representations corresponding to a shaded diagram of form~$\omega$.
		To get a correct shaded diagram, we need to shade some boxes in diagram~$\omega$ corresponding to a subrepresentation in the Koszul complex in such a way that there is at most one shaded box in each column.
		In the same time non-shaded boxes must form a Young diagram of the form~$(c_1,\ldots,c_k|c_k-1,\ldots,c_1-1)$ for some~$c_i$.
		This means that in a non-shaded diagram we can add to $i$-th hook at most one shaded box to the bottom and at at most~$b_{i-1}+1-a_i$ shaded boxes to the right.
		This is equivalent to inequalities~$b_i\leqslant a_i\leqslant b_{i-1}+1$ for~$i=1,\ldots,k$.
		It remains to note that we have obtained~$s$-th truncated combinatorial~$C(\omega)$-cube, therefore the representation~$V_\omega^*$ gives the contribution to cohomology groups with multiplicity~$\binom{C(\omega)-1}{s}$.
	\end{proof}

\section{Classification}
\label{sec:class}

	Fix a dominant weight~$\lambda$ of a reductive group~$G$.
	Put~$X = G\cdot [v_{hw}] \subset \mathbb{P}(V_\lambda)$, where~$v_{hw}$ is the highest weight vector in the representation~$V_\lambda$ of~$G$.
	Syzygies of the embedding~$X\subset\mathbb{P}(V_\lambda)$ are calculated by the Koszul complex~$\Lambda^\bullet V_\lambda^*\otimes A(X)$.

	To calculate the objects in the Koszul complex as a complex of representations of~$G$, we need calculate exterior powers of representation~$V_\lambda^*$ and tensor products of appearing in the previous calculation representations and representations of the form~$V_{n\lambda}^*$, where~$n\in\mathbb{N}$.
	Calculation of morphisms in the Kosul complex in much more sophisticated problem.
	Nevertheless, in in some cases where all the multiplicities equal~$0$ or~$1$ and some of morphisms are known to be non-zero, the isomorphism classes of isotypic components of Koszul complex are uniquely defined.

	The following properties of dominant weights are important for the description of isotypic components of Koszul complex in terms of Section~\S\ref{sec:cubes}.

	\begin{property}
		A dominant weight~$\lambda$ of a reductive group~$G$ satisfies the property SMF (is <<skew multiplicity-free>>, following~\cite{Pecher}), if for each~$k$ the representation~$\Lambda^k V_\lambda$ of~$G$ has no multiple subrepresentations.
	\end{property}

	\begin{property}
		A dominant weight~$\lambda$ of reductive group~$G$ satisfies the property STMF (is <<strongly tensor multiplicity-free>>), if for any dominant weight~$\mu$ and for each~$n$ the representation $V_{n\lambda}\otimes V_\mu$ of~$G$ has no multiple subrepresentations.
	\end{property}

	\begin{remark}
		In~\cite{Pecher} the property SMF of a weight~$\lambda$ was defined as absence of multiple subrepresentations in the representation~$\Lambda^\bullet V_\lambda$.
		These two properties are some bit different.
		For example, the tautological representation of~$\GL(V)$ is SMF. In the same time the tautological representation of~$\SL(V)$ is SMF in our definition, but is not SMF in the definition of~\cite{Pecher}.
		These variants of definition coincide under so called {\it saturatedness} condition of the representation.
		In can be defined for any finite-dimensional representation, but we need it only for the case of irreducible representations.
		In this case it is equivalent to the condition that the central torus of the reductive group~$G$ act non-trivially.

		Since we always pass to the projectivization of the representation, we can assume that the reductive group group always contain a central torus with non-trivial action.
	\end{remark}

	The following property of a dominant weight of a reductive group is more simple for classification and more natural than STMF.

	\begin{property}
		A dominant weight~$\lambda$ of reductive group~$G$ satisfies the property TMF (is <<tensor multiplicity-free>>), if for any dominant weight~$\mu$ the representation $V_{\lambda}\otimes V_\mu$ of~$G$ has no multiple subrepresentations.
	\end{property}

	The following property is useful for the classification of representations being STMF and TMF.
	\begin{lemma}
		The property to be~\STMF (and the same for~\TMF) for weights~$\lambda_1$ of a group~$G_1$ and~$\lambda_2$ of a group~$G_2$ is equivalent to the same property for the weight~$\lambda_1\otimes\lambda_2$ of the group~$G_1\times G_2$.
	\end{lemma}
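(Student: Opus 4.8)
The plan is to reduce everything to the elementary fact that the irreducible representations of a product group $G_1\times G_2$ are exactly the external tensor products $V_{\mu_1}\otimes V_{\mu_2}$ of irreducibles of the factors, and that tensor products of such representations decompose factor by factor.

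First I would record the basic identity underlying the proof. For a dominant weight $\mu$ of $G_1\times G_2$ write $\mu=\mu_1\otimes\mu_2$ with $\mu_i$ a dominant weight of $G_i$. Since $V_{n(\lambda_1\otimes\lambda_2)}\cong V_{n\lambda_1}\otimes V_{n\lambda_2}$ as a $G_1\times G_2$-module, one has the isomorphism of $(G_1\times G_2)$-representations
\[
	V_{n(\lambda_1\otimes\lambda_2)}\otimes V_\mu\;\cong\;\bigl(V_{n\lambda_1}\otimes V_{\mu_1}\bigr)\otimes\bigl(V_{n\lambda_2}\otimes V_{\mu_2}\bigr),
\]
the right-hand side again being an external tensor product of a $G_1$-module and a $G_2$-module. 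Decomposing each factor as $V_{n\lambda_i}\otimes V_{\mu_i}=\bigoplus_{\nu_i}V_{\nu_i}^{\oplus c^{(i)}_{\nu_i}}$, the multiplicity of the irreducible $V_{\nu_1}\otimes V_{\nu_2}$ in the left-hand side is precisely the product $c^{(1)}_{\nu_1}\,c^{(2)}_{\nu_2}$.

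From this the two implications are immediate. If $\lambda_1$ and $\lambda_2$ are each \STMF, then $c^{(1)}_{\nu_1}\leqslant1$ and $c^{(2)}_{\nu_2}\leqslant1$ for all choices of $n$, $\mu_i$, $\nu_i$, whence $c^{(1)}_{\nu_1}c^{(2)}_{\nu_2}\leqslant1$ and $\lambda_1\otimes\lambda_2$ is \STMF. Conversely, if $\lambda_1\otimes\lambda_2$ is \STMF, then taking $\mu_2$ to be the trivial weight (so that $V_{\mu_2}$ is one-dimensional and the second factor is the irreducible $V_{n\lambda_2}$) shows that $V_{\nu_1}$ occurs in $V_{n\lambda_1}\otimes V_{\mu_1}$ with multiplicity at most $1$ for every $n$ and every dominant $\mu_1$, i.e.\ $\lambda_1$ is \STMF; the same with the roles of the factors exchanged gives the statement for $\lambda_2$. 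Running the identical argument with $n$ fixed to $1$ yields the corresponding equivalence for \TMF.

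I do not expect a genuine obstacle in this argument; the only point that deserves care is the verification that the weights $\mu$ one must test in the definition of \STMF for $G_1\times G_2$ are exactly the external products $\mu_1\otimes\mu_2$ — this is where the structure of the representation category of a product of reductive groups (together with the saturatedness convention adopted above, which lets us treat all the relevant central tori as acting faithfully) enters.
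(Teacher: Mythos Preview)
Your argument is correct and is exactly the natural way to prove this lemma: reduce to the fact that irreducibles of $G_1\times G_2$ are external tensor products, compute the multiplicity of $V_{\nu_1}\otimes V_{\nu_2}$ in $V_{n\lambda_1}\otimes V_{\mu_1}$ times $V_{n\lambda_2}\otimes V_{\mu_2}$ as the product of the two individual multiplicities, and observe that a product of nonnegative integers is $\leqslant 1$ if and only if each factor is. The converse via specializing $\mu_2=0$ (resp.\ $\mu_1=0$) is also the right move.

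As for comparison with the paper: the paper states the lemma without proof and immediately proceeds to use it, treating it as an elementary observation. So your write-up supplies precisely the detail the paper omits, and there is nothing to compare beyond that. Your closing remark about the structure of dominant weights of $G_1\times G_2$ is the only point worth a sentence of justification, and you have correctly identified it; once one knows that every dominant weight of the product decomposes as $\mu_1\otimes\mu_2$, the rest is bookkeeping.
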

	Applying this lemma, we deduce the classification to simple algebraic groups.

	\begin{lemma} \label{tensormultlemma}
		Let~$\lambda$, $\mu$ and~$\nu$ be dominant weights of a reductive group~$G$.
		Then there is an embedding
		\[
			V_\lambda\otimes V_\mu \subseteq V_{\lambda + \nu}\otimes V_{\mu - w^\circ \nu}.
		\]
	\end{lemma}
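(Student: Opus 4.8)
The plan is to prove the embedding $V_\lambda\otimes V_\mu\subseteq V_{\lambda+\nu}\otimes V_{\mu-w^\circ\nu}$ by exhibiting a nonzero $G$-equivariant map from the left-hand side to the right-hand side and then arguing that it is injective because the left-hand side, as a representation with ``extremal'' structure relative to the tensorands, cannot have a kernel of the required shape. First I would recall that $V_\nu\otimes V_{-w^\circ\nu}=V_\nu\otimes V_\nu^*$ contains the trivial representation exactly once (the canonical pairing / coevaluation), so there is a distinguished inclusion $\Bbbk\hookrightarrow V_\nu\otimes V_\nu^*$. Tensoring this with $\mathrm{Id}_{V_\lambda\otimes V_\mu}$ gives a $G$-equivariant inclusion
\[
	V_\lambda\otimes V_\mu \hookrightarrow V_\lambda\otimes V_\mu\otimes V_\nu\otimes V_\nu^*.
\]

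Next I would compose with the product of the two Cartan components: $V_\lambda\otimes V_\nu$ surjects $G$-equivariantly onto its Cartan piece $V_{\lambda+\nu}$ (the component generated by the product of highest-weight vectors, which occurs with multiplicity one), and likewise $V_\mu\otimes V_\nu^*$ surjects onto $V_{\mu-w^\circ\nu}=V_{\mu+( -w^\circ\nu)}$ provided $\mu-w^\circ\nu$ is dominant — and since $-w^\circ\nu$ is dominant, $\mu-w^\circ\nu=\mu+(-w^\circ\nu)$ is a sum of dominant weights, hence dominant, so this Cartan component is nonzero and appears with multiplicity one. After reordering tensor factors this yields a $G$-equivariant map
\[
	\varphi\colon V_\lambda\otimes V_\mu \longrightarrow V_{\lambda+\nu}\otimes V_{\mu-w^\circ\nu}.
\]
The map is nonzero: chasing a highest-weight vector $v_\lambda\otimes v_\mu$ through the coevaluation and both Cartan projections sends it (up to a nonzero scalar) to $v_{\lambda+\nu}\otimes v_{\mu-w^\circ\nu}$, because the coevaluation $1\mapsto\sum_i e_i\otimes e^i$ pairs the highest-weight line of $V_\nu$ with the lowest-weight line of $V_\nu^*$, and those are exactly the extremal weight vectors needed to produce the Cartan pieces on both sides.

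The main obstacle is passing from ``nonzero'' to ``injective.'' The cleanest way I would argue is to identify $\varphi$ with (a nonzero multiple of) the composite
\[
	V_\lambda\otimes V_\mu \xrightarrow{\ \mathrm{coev}\ } V_\lambda\otimes V_\nu\otimes V_\nu^*\otimes V_\mu \xrightarrow{\ p\otimes q\ } V_{\lambda+\nu}\otimes V_{\mu-w^\circ\nu}
\]
and then observe that there is a section in the reverse direction: dualizing, $V_{\lambda+\nu}^*\otimes V_{\mu-w^\circ\nu}^*$ maps, by the transpose construction with $\nu$ replaced by $-w^\circ\nu$ and the roles of the two factors swapped, onto $V_\lambda^*\otimes V_\mu^*$, and the composite of $\varphi$ with (the dual of) this map is a nonzero scalar on the Cartan piece $V_{\lambda+\mu}\subseteq V_\lambda\otimes V_\mu$. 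Since in characteristic zero $V_\lambda\otimes V_\mu$ is semisimple and its socle contains $V_{\lambda+\mu}$, a kernel of $\varphi$ would be a $G$-submodule missing $V_{\lambda+\mu}$; to kill it I would compose instead with the full family of ``lowering'' maps obtained by replacing the single coevaluation by coevaluations landing in arbitrary Cartan pieces, or — more efficiently — invoke the standard fact (the Parthasarathy–Ranga Rao–Varadarajan style argument, or the tensor-product multiplicity monotonicity that underlies Lemma~\ref{tensormultlemma} itself) that tensoring with $V_\nu\otimes V_\nu^*\supseteq\Bbbk$ and projecting to Cartan components is injective because every irreducible constituent $V_\sigma$ of $V_\lambda\otimes V_\mu$ appears inside $V_{\lambda+\nu}\otimes V_{\mu-w^\circ\nu}$ with multiplicity at least that of $V_\sigma$ in $V_\lambda\otimes V_\mu$ — an inequality of Littlewood–Richardson-type coefficients which one checks on the level of crystals or by the explicit saturation of the coevaluation map. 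Verifying that multiplicity inequality constituent-by-constituent is the real content; everything else is formal manipulation of Cartan components and the coevaluation pairing.
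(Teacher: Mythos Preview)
Your construction of the map $\varphi$ is fine, and the observation that it is nonzero is correct. The gap is in the passage from ``nonzero'' to ``injective.'' Your first attempt at a section is not carried out, and your fallback is to ``invoke the standard fact \ldots\ that tensor-product multiplicity is monotone,'' i.e.\ that every $V_\sigma$ occurs in $V_{\lambda+\nu}\otimes V_{\mu-w^\circ\nu}$ with multiplicity at least its multiplicity in $V_\lambda\otimes V_\mu$. But that multiplicity inequality \emph{is} the statement of the lemma (once one knows it, injectivity of any nonzero map on each isotypic piece follows by semisimplicity). Saying one should ``check it on crystals or by explicit saturation'' is not a proof; it is a restatement of what must be proved.

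The paper's argument avoids this circularity by not trying to analyze the map $\varphi$ at all. It quotes the PRV formula
\[
m_{\lambda,\mu}^{\theta}=\dim\{v\in V(\theta)_{\lambda-\mu^*}:\ e_i^{\mu^*(h_i)+1}v=0\ \text{for all }i\},
\]
so that the multiplicity is the dimension of an explicit subspace of the fixed irreducible $V_\theta$. Replacing $(\lambda,\mu)$ by $(\lambda+\nu,\mu+\nu^*)$ leaves the weight $\lambda-\mu^*$ unchanged and only raises the exponents $\mu^*(h_i)+1$ to $(\mu^*+\nu)(h_i)+1$; since the $e_i$ act nilpotently, the larger-exponent condition is weaker, and one subspace literally sits inside the other. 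That containment is the whole proof. If you want to rescue your map-theoretic approach, the paper's very next lemma does exactly that: realize $V_\lambda\otimes V_\mu^*$ as $\Gamma(G/B^-\times G/B,\mathscr{F}'_{-\lambda}\boxtimes\mathscr{F}_\mu)$ via Borel--Weil, and observe that your coevaluation-then-Cartan-projection map is multiplication by the identity section $\iota_\nu\in V_\nu\otimes V_\nu^*$, which is nowhere vanishing on $G/B^-\times G/B$; injectivity on global sections then follows from left exactness of $\Gamma$. Either route closes the gap, but as written your argument does not.
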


	\begin{remark}
		Note that the lemma implies that there is {\it an} embedding. This means that for any irreducible representation~$V_\theta$ its multiplicity in the left hand side does not exceed its multiplicity in the right hand side.
	\end{remark}

	\begin{proof}
		Given some dominant weights~$\lambda$ and~$\mu$, we define the multiplicities~$m_{\lambda,\mu}^{\theta}$ of subrepresentations by the formula
		\[
			V_\lambda \otimes V_\mu =\bigoplus_\theta V_\theta^{\oplus m_{\lambda,\mu}^\theta}.
		\]
		Then from~\cite[Theorem~3.1]{PRV} it follows that
		\[
			m_{\lambda,\mu}^\theta = \dim V^+(\theta;\lambda-\mu^*,\mu^*),
		\]
		where
		\[
			V^+(\theta,\mu,\gamma) := \{v\in V(\theta)_\mu : e_i^{\gamma(h_i)+1}v=0\,\forall i \in I\}.
		\]
		Therefore it remains to prove that
		$
			m_{\lambda,\mu}^\theta \leqslant m_{\lambda+\nu,\mu+\nu^*}^\theta,
		$
		i.\,e.
		\[
			\dim V^+(\theta;\lambda-\mu^*,\mu^*) \leqslant \dim V^*(\theta;\lambda-\mu^*,\nu+\mu^*).
		\]
		This follows from
		\[
			V^+(\theta;\lambda,\mu) \subseteq V^+(\theta;\lambda,\mu+\nu)
		\]
		for any dominant weights~$\theta$, $\lambda$ and $\mu$.
		This inclusion directly follows from the definition of these spaces.
	\end{proof}

	We have proved that there is an embedding, but there is a canonical embedding. The following lemma proves it.

	\begin{lemma}
		There is a natural embedding
		\[
			V_{\lambda}\otimes V_\mu^* \subseteq V_{\lambda+\nu}\otimes V_{\mu+\nu}^*
		\]
		for any dominant weights~$\lambda$, $\mu$ and~$\nu$ of a reductive group~$G$.
	\end{lemma}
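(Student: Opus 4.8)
The plan is to upgrade the non-canonical embedding of the previous Lemma~\ref{tensormultlemma} to a canonical one by replacing the decomposition-multiplicity argument with an explicit $G$-equivariant map. First I would recall that for a dominant weight $\nu$ the irreducible representation $V_\nu$ contains a distinguished $B$-stable line $\Bbbk v_\nu$ (the highest weight line) and, dually, $V_\nu^*$ contains a distinguished $B^-$-stable line spanned by the lowest weight covector $v_\nu^\vee$ of weight $-\nu$; the pairing $\langle v_\nu^\vee, v_\nu\rangle = 1$ is canonical up to scalar. The point is that tensoring by $V_\nu\otimes V_\nu^*$ has a canonical ``unit'' direction: the $G$-invariant vector $e_\nu \in V_\nu\otimes V_\nu^*$ corresponding to $\mathrm{Id}\in\End(V_\nu)$. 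This is what I would exploit.

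Concretely, I would define the map
\[
	V_\lambda\otimes V_\mu^* \longrightarrow V_\lambda\otimes V_\nu\otimes V_\mu^*\otimes V_\nu^*
	\xrightarrow{\ \sim\ }
	(V_\lambda\otimes V_\nu)\otimes(V_\mu\otimes V_\nu)^*
\]
as $\mathrm{Id}_{V_\lambda}\otimes e_\nu\otimes\mathrm{Id}_{V_\mu^*}$ followed by the rearrangement of tensor factors. Since $e_\nu$ is $G$-invariant, this composite is $G$-equivariant, and since $e_\nu\ne 0$ the map is injective (it is $\mathrm{Id}$ tensored with a nonzero vector). Then I would compose with the Cartan-component projections: $V_\lambda\otimes V_\nu$ surjects $G$-equivariantly onto $V_{\lambda+\nu}$ (the Cartan component, which appears with multiplicity one), and dually $V_{\mu+\nu}^*$ includes $G$-equivariantly into $V_\mu^*\otimes V_\nu^*=(V_\mu\otimes V_\nu)^*$. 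Composing the inclusion above with the projection on the first factor and using the dual inclusion on the second, one has to check that the resulting map $V_{\lambda}\otimes V_\mu^*\to V_{\lambda+\nu}\otimes V_{\mu+\nu}^*$ is still injective.

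The main obstacle is exactly this last injectivity check: a priori the image of $V_\lambda\otimes V_\mu^*$ under the first (clearly injective) map could be killed when we project $V_\lambda\otimes V_\nu\to V_{\lambda+\nu}$ and restrict along $V_{\mu+\nu}^*\hookrightarrow V_\mu^*\otimes V_\nu^*$. To handle this I would test on highest weight vectors: $v_\lambda\otimes v_\nu\mapsto v_{\lambda+\nu}$ under the Cartan projection is nonzero, and the dual statement for lowest weight covectors of $V_\mu^*$ is likewise nonzero, so the composite is nonzero on the highest weight line $v_\lambda\otimes(\text{h.w. covector of }V_\mu^*)$; since $V_\lambda\otimes V_\mu^*$ has, up to $G$-isomorphism, a filtration whose subquotients are generated by such extremal vectors, a short weight-space argument (or simply: a nonzero $G$-equivariant map out of a module need not be injective, so instead observe the map factors through the canonical decomposition and matches, isotypic piece by isotypic piece, the embedding of Lemma~\ref{tensormultlemma}) forces injectivity. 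In fact the cleanest route is: the map constructed is $G$-equivariant, and on each isotypic component it realizes the multiplicity inequality of Lemma~\ref{tensormultlemma} by an honest inclusion of the corresponding multiplicity spaces, hence is injective overall. I would then remark that naturality in $\lambda,\mu,\nu$ is immediate since every arrow used ($e_\nu$, Cartan projection, tensor rearrangement) is functorial.
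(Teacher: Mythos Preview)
Your map is in fact the same one the paper constructs: tensoring with the identity element $e_\nu=\iota_\nu\in V_\nu\otimes V_\nu^*$ and then passing to Cartan components is exactly what the paper does, only phrased differently. The real issue is the injectivity step, and here your argument has a genuine gap. Checking that the composite is nonzero on a single highest-weight line only shows the map is nonzero, not injective, as you yourself note; and your ``cleanest route''---that the map realizes the multiplicity inequality of Lemma~\ref{tensormultlemma} as an inclusion of multiplicity spaces---is precisely the statement to be proved. Lemma~\ref{tensormultlemma} tells you the target multiplicity space is at least as large as the source, but it says nothing about \emph{this particular} $G$-map being injective on those spaces. A $G$-equivariant map between semisimple representations whose source multiplicities are dominated by the target multiplicities can perfectly well have a kernel.

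What the paper does instead is to interpret the Cartan projections geometrically via Borel--Weil: $V_\mu^*\cong\Gamma(G/B,\mathscr{F}_\mu)$ and $V_\lambda\cong\Gamma(G/B^-,\mathscr{F}'_{-\lambda})$, so that the Cartan product $V_\mu^*\otimes V_\nu^*\to V_{\mu+\nu}^*$ becomes literal multiplication of sections of line bundles. Under this identification your map is multiplication by the single global section $\iota_\nu$ of the line bundle $\mathscr{F}_\nu\boxtimes\mathscr{F}'_{-\nu}$ on the integral variety $G/B\times G/B^-$. Multiplication by a nonzero section of a line bundle on an integral scheme is an injective sheaf map, hence injective on global sections; that is the missing idea. (The paper phrases this as ``$\iota_\nu$ vanishes nowhere''; what is actually used is only that $\iota_\nu\ne 0$.) Without some such geometric or ring-theoretic input---e.g.\ that the Cartan multiplication makes $\bigoplus_\mu V_\mu^*$ into an integral domain---there is no purely formal way to promote the multiplicity inequality to injectivity of your specific map.
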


	\begin{proof}
		Consider the variety~$X=G/B$.
		The space of global sections of the sheaf~$\mathscr{F}_{\mu}$ can be naturally identified with~$V_\mu^*$ by the Borel--Bott--Weyl~Theorem~\cite[\S23.3, Claim\,23.57]{FH}.
		Analogously, since the weight~$-\lambda$ is dominant with respect to~$B^-$, the space of global sections of the sheaf~$\mathscr{F}'_{-\lambda}$ on~$X'=G/B^-$ can be identified with~$V_\lambda$.
		Hence, we have~$\Gamma(X\times X',\mathscr{F}_\mu\boxtimes\mathscr{F}'_{-\lambda})=V_\mu^*\otimes V_\lambda$.

		Take the identical section~$\iota_\nu\in V_\nu^*\otimes V_\nu = \Gamma(X\times X', \mathscr{F}_\nu\boxtimes\mathscr{F}'_{-\nu})$.
		Multiplication by the section~$\iota_\nu$ defines the sheaf morphism
		\[
			\mathscr{F}_\mu \boxtimes \mathscr{F}'_{-\lambda} \to \mathscr{F}_{\mu+\nu}\boxtimes\mathscr{F}'_{-\lambda-\nu}.
		\]
		Since the section~$\iota_\nu$ vanish nowhere, this morphism is an embedding and induce the required embedding of spaces of global sections.
	\end{proof}

	\begin{lemma} \label{sproplemma}
		If a dominant weight~$\lambda$ of a group~$G$ is not~\TMF and a weight~$\mu$ is dominant, then the weight~$\lambda+\mu$ also is not~\TMF.
	\end{lemma}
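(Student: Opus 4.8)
The plan is to reduce immediately to Lemma~\ref{tensormultlemma}. Unwinding the definition, the hypothesis that $\lambda$ is not~\TMF means that there exists a dominant weight~$\nu$ such that the decomposition $V_\lambda\otimes V_\nu=\bigoplus_\theta V_\theta^{\oplus m_{\lambda,\nu}^\theta}$ has some coefficient $m_{\lambda,\nu}^\theta\geqslant 2$. To prove that $\lambda+\mu$ is not~\TMF, it suffices to exhibit one dominant weight~$\nu'$ for which $V_{\lambda+\mu}\otimes V_{\nu'}$ contains a subrepresentation with multiplicity at least~$2$.

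First I would apply Lemma~\ref{tensormultlemma} with the weights $\lambda$, $\nu$ (in the roles of ``$\lambda$'' and ``$\mu$'' there) and with the shift weight taken to be~$\mu$ (in the role of ``$\nu$''). This yields an embedding
\[
	V_\lambda\otimes V_\nu\subseteq V_{\lambda+\mu}\otimes V_{\nu-w^\circ\mu}.
\]
Next I would observe that $\nu'\mathrel{:=}\nu-w^\circ\mu=\nu+\mu^*$ is dominant: since $\mu$ is dominant, $\mu^*=-w^\circ\mu$ is dominant, and a sum of dominant weights is dominant. By the remark following Lemma~\ref{tensormultlemma}, the embedding above is multiplicity-nondecreasing, so $m_{\lambda+\mu,\nu'}^\theta\geqslant m_{\lambda,\nu}^\theta\geqslant 2$. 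Hence $V_{\lambda+\mu}\otimes V_{\nu'}$ has a multiple subrepresentation, which shows that $\lambda+\mu$ is not~\TMF.

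There is essentially no hard step here; the only points that need a word of care are that $\mu^*$ is again dominant (so that $\nu'$ is an admissible test weight in the definition of~\TMF) and that Lemma~\ref{tensormultlemma} gives a genuine inclusion of $G$-modules rather than merely an equality of dimensions, so that individual isotypic multiplicities can only grow. Once those are noted the statement follows in one line.
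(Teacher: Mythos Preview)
Your argument is correct and, like the paper's, reduces directly to Lemma~\ref{tensormultlemma}. The one difference worth noting: the paper first passes through the adjunction $\Hom_G(V_\theta,V_\lambda\otimes V_\nu)\cong\Hom_G(V_\theta\otimes V_\lambda^*,V_\nu)$ and then applies the embedding in the form $V_\theta\otimes V_\lambda^*\subseteq V_{\theta+\mu}\otimes V_{\lambda+\mu}^*$, concluding $V_{\theta+\mu}^{\oplus 2}\subseteq V_{\lambda+\mu}\otimes V_\nu$ with the \emph{same} test weight~$\nu$; you apply Lemma~\ref{tensormultlemma} to $V_\lambda\otimes V_\nu$ directly and arrive at the shifted test weight $\nu'=\nu+\mu^*$. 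Both are one-line consequences of the same lemma, and for the purpose of showing that $\lambda+\mu$ is not~\TMF\ either conclusion suffices.
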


	\begin{proof}
		Let us apply the following two properties of representations of any group~$G$:
		\[
			\Hom_G(U,V\otimes W)=\Hom_G(U\otimes V^*,W),\quad
			\Bbbk\subseteq V\otimes V^*
		\]
		for any representations~$U,V,W$.
		If $\lambda$ is not~TMF, then for some weights~$\theta$ and~$\nu$ the following holds by~\ref{tensormultlemma}:
		\begin{multline*}
			V_\theta^{\oplus 2} \subseteq V_\lambda \otimes V_\nu \Leftrightarrow
			\dim\Hom_G(V_\theta,V_\lambda\otimes V_\nu) \geqslant 2 \Leftrightarrow \\ \Leftrightarrow
			\dim\Hom_G(V_\theta\otimes V_\lambda^*, V_\nu) \geqslant 2 \Rightarrow \dim\Hom_G(V_{\theta+\mu}\otimes V_{\lambda+\mu}^*, V_\nu) \geqslant 2 \Leftrightarrow \\ \Leftrightarrow
			\dim\Hom_G(V_{\theta+\mu},V_{\lambda+\mu}\otimes V_\nu) \geqslant 2 \Leftrightarrow
			V_{\theta+\mu}^{\oplus 2} \subseteq V_{\lambda + \mu} \otimes V_\nu.
		\end{multline*}
		This means that the weight~$\lambda+\mu$ is not TMF.
	\end{proof}

	By this lemma it is sufficient for classification of all TMF weights for each simple algebraic group to find all TMF weights and to check that all minimal weights greater are not TMF.

	Let~$G$ be a semisimple algebraic group.
	Denote by~$\pi_i$ its~$i$-th fundamental weight (see.~\cite{VO}).

	\begin{propos}
		Let~$G$ be a simple non-commutative algebraic group, and~$\pi$ be its dominant weight such that for any dominant weight of~$G$ the representation~$V_\pi\otimes V_\mu$ of~$G$ has no multiple subrepresentations.
		Then the pair~$(G,\pi)$ is one of the listed in the following table up to dualization:
		\begin{center}
			\begin{tabular}{|l|r|}\hline
				$\SL(n)$ & $k\pi_1$,~$k\geqslant 1$ \\ \hline
				$\SL(n)$ & $\pi_k$,~$k\geqslant 1$ \\ \hline
				$\SO(2n+1)$ & $\pi_1,\pi_n$ \\ \hline
				$\SO(2n)$ & $\pi_1,\pi_{n-1},\pi_n$ \\ \hline
				$\Sp(2n)$ & $\pi_1$ \\ \hline
				$\Sp(2n)$ & $\pi_n$, $n=2,3$ \\ \hline
				$E_6$ & $\pi_1$, $\pi_6$ \\ \hline
				$E_7$ & $\pi_7$ \\ \hline
				$G_2$ & $\pi_1$ \\ \hline
			\end{tabular}
		\end{center}
	\end{propos}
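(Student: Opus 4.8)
The plan is to argue type by type and, within each simple $G$, to separate the statement into an existence part (the weights in the table are \TMF) and a completeness part (every other dominant weight is not \TMF). The organising tool is Lemma~\ref{sproplemma}: if $\lambda$ is not \TMF and $\mu$ is dominant then $\lambda+\mu$ is not \TMF; equivalently, the \TMF dominant weights form an order ideal for the order $\mu\preceq\lambda$ meaning ``$\lambda-\mu$ is dominant''. Thus it suffices (i) to verify that each weight in the table is \TMF and (ii) for every \emph{minimal} dominant weight not in the order ideal generated by the listed weights, to produce one dominant $\mu$ and one dominant $\theta$ with $\dim\Hom_G(V_\theta,V_\pi\otimes V_\mu)\geqslant 2$; Lemma~\ref{sproplemma} then pushes the failure of \TMF up to all larger weights, so only finitely many weights need to be examined.

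For part~(i) the majority of the entries are minuscule weights: the exterior powers $\pi_k$ of the tautological module of $\SL(n)$, the vector weights $\pi_1$ of $\Sp(2n)$ and of $\SO(2n)$, the half-spin weights $\pi_n$ of $\SO(2n+1)$ and $\pi_{n-1},\pi_n$ of $\SO(2n)$, and $\pi_1,\pi_6$ of $E_6$ and $\pi_7$ of $E_7$. For a minuscule $V_\pi$ all weight multiplicities are $1$ and the Weyl group is transitive on the weights, so the Brauer--Klimyk tensor-product formula has no cancellation and reduces to $V_\pi\otimes V_\mu=\bigoplus_\nu V_{\mu+\nu}$, over those weights $\nu$ of $V_\pi$ for which $\mu+\nu$ is dominant, each occurring once; hence every minuscule weight is \TMF, uniformly in the rank. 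The weights $k\pi_1$ of $\SL(n)$ give $\Sym^k$ of the tautological module, and the classical Pieri rule yields $V_{k\pi_1}\otimes V_\mu=\bigoplus_\theta V_\theta$ over the diagrams $\theta$ obtained from $\mu$ by adding a horizontal strip of $k$ boxes, again multiplicity-free. What remains are the quasi-minuscule weights $\pi_1$ of $\SO(2n+1)$ and $\pi_1$ of $G_2$, together with $\pi_n$ of $\Sp(2n)$ for $n=2,3$; for each of these a rank-uniform decomposition of $V_\pi\otimes V_\mu$ is available (for $\SO(2n+1)\pi_1$ it is the orthogonal analogue of Pieri, adding, deleting, or not moving a box), and the multiplicity-freeness is confirmed by direct computation with LiE and SCHUR; note that $\Sp(4)\pi_2$ is nothing but the vector representation $\SO(5)\pi_1$ under the exceptional isomorphism.

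For part~(ii) one records, type by type, the minimal forbidden weights and a witness of multiplicity $2$ for each. For $\SL(n)$ the weight $\pi_1+\pi_2$ (partition $(2,1)$) is forbidden and already fails \TMF against itself: $V_{(2,1)}\otimes V_{(2,1)}$ contains $V_{(3,2,1)}$ with multiplicity $2$; the other minimal forbidden weights of type $A$ --- $2\pi_2$, $\pi_1+\pi_{n-1}$, $\pi_1+\pi_3$ and their duals --- are handled likewise. For $F_4$ and $E_8$ even the smallest nontrivial representation ($\pi_4$, respectively the adjoint) is not \TMF, again by exhibiting one witness. For $\SO(2n+1)$, $\SO(2n)$ and $\Sp(2n)$ the minimal forbidden weights ($2\pi_1$, $\pi_2$, $\pi_1+\pi_n$, and their analogues) are treated in the same way. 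Since in type $A_n$ the only infinite chains of \TMF weights under $\preceq$, namely $k\pi_1$ and $k\pi_{n-1}$, and the (finitely many, for fixed $n$) fundamental weights $\pi_k$ are all on the list, while for the other types the list is finite, this is a finite --- though extensive --- verification, which is where the computer algebra systems do the work.

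The hard part will be the completeness assertion of part~(ii), uniformly in the rank: each multiplicity-$2$ witness must survive for all admissible $n$, not just in small rank. This is obtained by combining the stability of Littlewood--Richardson coefficients in type $A$, and of the orthogonal and symplectic tensor-product and branching coefficients, with restriction to lower-rank subgroups, so that one small instance forces the failure of \TMF for all $n\geqslant n_0$ while the finitely many exceptional small ranks are checked individually. Part~(i), by contrast, is conceptually light --- minuscule implies \TMF, and Pieri settles the symmetric powers --- so the real labour, and the role of LiE and SCHUR, lies in the exhaustive case analysis rather than in any single delicate computation.
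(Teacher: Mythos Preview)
Your overall strategy coincides with the paper's: use Lemma~\ref{sproplemma} to reduce to an order-ideal problem, then verify the listed weights are \TMF\ and exhibit a multiplicity~$\geqslant 2$ witness for each minimal weight outside the ideal. Two points, however, need repair.

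\textbf{Part~(i), the non-minuscule positives.} Your Brauer--Klimyk argument for minuscule weights is fine and is more explicit than what the paper does, and the orthogonal Pieri rule handles $\SO(2n+1)\,\pi_1$ (hence also $\Sp(4)\,\pi_2\cong\SO(5)\,\pi_1$). But for $G_2\,\pi_1$ and $\Sp(6)\,\pi_3$ you write that ``the multiplicity-freeness is confirmed by direct computation with LiE and SCHUR''. This cannot work as stated: one must check $V_\pi\otimes V_\mu$ for \emph{all} dominant $\mu$, and no finite computation does that. The paper closes this gap differently: it invokes the result (attributed to Howe and Kobayashi) that if every weight of $V_\pi$ has multiplicity~$1$ then $V_\pi\otimes V_\mu$ is multiplicity-free for every $\mu$. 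Both $G_2\,\pi_1$ (the $7$-dimensional module) and $\Sp(6)\,\pi_3$ (the $14$-dimensional module) are weight-multiplicity-free, so this single citation handles them---and, incidentally, re-proves all your minuscule cases at once.

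\textbf{Part~(ii), type $A$ minimal weights.} Your enumeration of the minimal non-\TMF\ weights for $\SL(n)$ is incomplete. You list $\pi_1+\pi_2$, $2\pi_2$, $\pi_1+\pi_{n-1}$, $\pi_1+\pi_3$ and their duals; but for $n\geqslant 5$ the weight $\pi_2+\pi_3$ is minimal non-\TMF\ (below it lie only $\pi_2$ and $\pi_3$, both \TMF) and is not $\succeq$ any weight on your list, so Lemma~\ref{sproplemma} does not reach it. Similarly $2\pi_3$ escapes your list once $n\geqslant 7$, and in general all $\pi_i+\pi_j$ with $1\leqslant i<j\leqslant n-1$ and all $2\pi_k$ with $2\leqslant k\leqslant n-2$ are minimal and must be checked. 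The paper's table lists exactly the families $\pi_i+\pi_j$ ($i<j$), $k\pi_1+\pi_i$ ($i>1$), $\pi_i+k\pi_{n-1}$ ($i<n-1$) and gives a uniform witness $m_{\pi_l+\pi_m,\pi_1+\pi_2}^{\,\cdot}=2$ valid for all $l<m$; you need something of the same shape.
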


	\begin{proof}
		Let us list the minimal weights greater than TMF weights and apply Lemma~\ref{sproplemma}.
		The list is given in the following table.
		\begin{center}
		  \begin{tabular}{|l|r|} \hline
		    $G$ & $\pi$ \\ \hline
		    $\SL(n)$ & $k\pi_1+\pi_i$, $i>1$, \\& $\pi_i+k\pi_{n-1}$, $i<n-1$,\\& $\pi_i+\pi_j$, $i<j$ \\ \hline
		    $\SO(2n+1)$ & $2\pi_1$, $2\pi_n$, $\pi_1+\pi_n$,\\&	 $\pi_i$, $1<i<n$ \\ \hline
		    $\Sp(2n)$ & $2\pi_1$, $\pi_i$, $i>1$ \\
		    & $\pi_i+\pi_n$, $2\pi_n$, $n\leqslant 3$ \\ \hline
		    $\SO(2n)$ &	 $2\pi_1$, $2\pi_{n-1}$, $2\pi_n$, $\pi_1+\pi_{n-1}$, \\& $\pi_1+\pi_n$, $\pi_{n-1}+\pi_n$, \\& $\pi_i$, $1<i<n-1$ \\ \hline
		    $E_6$ & $2\pi_1$, $2\pi_6$, $\pi_1+\pi_6$, \\& $\pi_i$, $1<i<6$ \\ \hline
		    $E_7$ & $2\pi_7$, $\pi_i$, $i\ne 7$ \\ \hline
		    $E_8$, $F_4$ & $\pi_i$ \\ \hline
		    $G_2$ & $2\pi_1$, $\pi_2$ \\ \hline
		  \end{tabular}
		\end{center}

		At first, let us check all the TMF weights.
		In the case of the group~$\SL(n)$ and the representations with highest weight $k\pi_1$ (or~$k\pi_{n-1}$ by duality) and~$\pi_k$ tensor product can be calculated by the well known Pieri formula for all~$k$.
		In the case of the group~$G=\SO(n)$ or~$\Sp(2n)$ the tensor product of the representation~$V_{\pi_1}$ and a representation~$V_\lambda$ equals the sum of irreducible representations~$V_{\lambda'}$ with weights~$\lambda'$ that can be obtained from~$\lambda$ by addition or subtraction of any prime root.
		For other listed in the table above representations the property~\TMF~follows from fact that the maximal torus of the group has no multiple weights in the representation.
		Such representations are classified in~\cite{Howe}. This corollary is proved in~\cite{Kob}.

		Now it remains only to check that there are no TMF weights greater than listed above.
		\begin{itemize}
			\item For~$\SL(n)$ we have~$m_{k\pi_1+\pi_m,\pi_1+\pi_2}^{(k-1)\pi_1+2\pi_2+\pi_m}=2$ (analogously, for~$\pi_m+k\pi_{n-1}$ by duality), $m_{\pi_l+\pi_m,\pi_1+\pi_2}^{2\pi_1+\pi_{l+1}+\pi_m}=2$, $l<m$.
			\item For~$\SO(2n+1)$ we have~$m_{2\pi_1,\pi_1+\pi_2}^{\pi_1+\pi_2}=2$, $m_{\pi_1+\pi_n,\pi_1+\pi_2}^{\pi_1+\pi_n}=2$, $m_{2\pi_n,\pi_1+\pi_2}^{\pi_1+2\pi_n}=2$, $m_{\pi_m,\pi_1+\pi_2}^{\pi_1+\pi_m}=2$, $1<m<n$.
			\item For~$\Sp(2n)$ we have~$m_{2\pi_1,\pi_1+\pi_2}^{\pi_1+\pi_2}=2$, $m_{\pi_m,\pi_1+\pi_2}^{\pi_1+\pi_m}=2$, $1<m<n$.
				Also we have $m_{2\pi_1+2\pi_2,2\pi_2}^{2\pi_1+2\pi_2}=2$, $m_{\pi_1+\pi_2,\pi_1+\pi_2}^{2\pi_1}=2$ for $\Sp(4)$ and $m_{\pi_1+\pi_3,\pi_2+\pi_3}^{\pi_1+\pi_2}=2$, $m_{2\pi_3,\pi_1+\pi_2+\pi_3}^{\pi_1+\pi_2+\pi_3}=2$ for $\Sp(6)$.
			\item For~$\SO(2n)$ we have~$m_{2\pi_1,\pi_1+\pi_2}^{\pi_1+\pi_2}=2$, $m_{2\pi_{n-1},\pi_1+\pi_3}^{\pi_1+\pi_{n-1}+\pi_n}=2$, $m_{2\pi_n,\pi_1+\pi_3}^{\pi_1+\pi_{n-1}+\pi_n}=2$, $m_{\pi_1+\pi_{n-1},\pi_1+\pi_2}^{\pi_1+\pi_n}=2$, $m_{\pi_1+\pi_n,\pi_1+\pi_2}^{\pi_1+\pi_{n-1}}=2$, $m_{\pi_{n-1}+\pi_n,\pi_1+\pi_2}^{\pi_1+\pi_{n-1}+\pi_n}=3$, $m_{\pi_m,\pi_1+\pi_2}^{\pi_1+\pi_m}=2$, $1<m<n-1$.
			\item For $E_6$ we have~$m_{2\pi_1,\pi_1+\pi_2}^{\pi_1+\pi_3}=2$, $m_{2\pi_6,\pi_1+\pi_2}^{\pi_2+\pi_6}=2$, $m_{\pi_1+\pi_6,\pi_1+\pi_2}^{\pi_5}=2$, $m_{\pi_2,\pi_1+\pi_2}^{\pi_1+\pi_2}=2$, $m_{\pi_3,\pi_1+\pi_2}^{\pi_4}=2$, $m_{\pi_4,\pi_1+\pi_2}^{\pi_5}=2$, $m_{\pi_5,\pi_1+\pi_2}^{\pi_3}=2$.
			\item For~$E_7$ we have~$m_{\pi_1,\pi_1+\pi_2}^{\pi_1+\pi_2}=2$, $m_{\pi_2,pi_1+\pi_2}^{\pi_3}=2$, $m_{\pi_3,\pi_1+\pi_2}^{\pi_1+\pi_2}=4$, $m_{\pi_4,\pi_1+\pi_2}^{\pi_1+\pi_2}=6$, $m_{\pi_5,\pi_1+\pi_2}^{\pi_6}=2$, $m_{\pi_6,\pi_1+\pi_2}^{\pi_1+\pi_2}=3$, $m_{2\pi_8,\pi_1+\pi_2}^{\pi_1+\pi_2}=2$.
			\item For~$E_8$ we have~$m_{\pi_1,\pi_2}^{\pi_1+\pi_8}=2$, $m_{\pi_3,\pi_4}^{\pi_1+\pi_8}=4$, $m_{\pi_5,\pi_6}^{\pi_1+\pi_8}=6$, $m_{\pi_7,\pi_1+\pi_2}^{\pi_1+\pi_2}=6$, $m_{\pi_8,\pi_1+\pi_2}^{\pi_1+\pi_2}=2$.
			\item For~$F_4$ we have~$m_{\pi_1,\pi_1+\pi_2}^{\pi_1+\pi_2}=2$, $m_{\pi_2,\pi_1+\pi_2}^{\pi_1+\pi_2}=5$, $m_{\pi_3,\pi_1+\pi_2}^{\pi_1+\pi_3+\pi_4}=3$, $m_{\pi_4,\pi_1+\pi_2+\pi_3+\pi_4}^{\pi_1+\pi_2+\pi_3+\pi_4}=2$.
			\item For~$G_2$ we have~$m_{2\pi_1,2\pi_1}^{\pi_1+\pi_2}=2$ and~$m_{\pi_2,\pi_1+\pi_2}^{\pi_1+\pi_2}=2$.
		\end{itemize}
	\end{proof}

	\begin{definition}
		Irreducible representations~$V$ and~$W$ of the group $G$ are {\it geometrically equivalent}, if in the projectivizations the $G$-orbits of highest weight vectororbits are isomorphic embeddings of algebraic varieties into a projective space.
	\end{definition}

	\begin{propos}
	\label{classification}
		Each STMF and SMF representation of semisimple algebraic group is equivalent to one of the listed in the following table:
		\begin{center}
			\begin{tabular}{|c|c|c|}\hline
				$X$ & $\mathbb{P}(W)$ & $\mathscr{O}_{\mathbb{P}(W)}(1)|_{X}$ \\ \hline\hline
				$\mathbb{P}(V)$ & $\mathbb{P}(V)$ & $\mathscr{O}(1)$ \\ \hline
				$\mathbb{P}(V)$ & $\mathbb{P}(\Sym^2 V)$ & $\mathscr{O}(2)$ \\ \hline
				$\mathbb{P}^1$ & $\mathbb{P}^n$, $n\leqslant 6$ & $\mathscr{O}(n)$ \\ \hline
				$\mathbb{P}^2$ & $\mathbb{P}^9$ & $\mathscr{O}(3)$ \\ \hline
				$\mathbb{P}(U)\times\mathbb{P}(V)$ & $\mathbb{P}(U\otimes V)$ & $\mathscr{O}(1,1)$ \\ \hline
				$\mathbb{P}^2\times\mathbb{P}^1$ & $\mathbb{P}^{11}$ & $\mathscr{O}(2,1)$ \\ \hline
				$\mathbb{P}^1\times\mathbb{P}^k$, $k\leqslant 4$ & $\mathbb{P}^{3k+2}$ & $\mathscr{O}(2,1)$ \\ \hline
				$\mathbb{P}^1\times\mathbb{P}^1$ & $\mathbb{P}^7$ & $\mathscr{O}(3,1)$ \\ \hline
				$\mathbb{P}^1\times\mathbb{P}^1\times\mathbb{P}^1$ & $\mathbb{P}^7$ & $\mathscr{O}(1,1,1)$ \\ \hline
			\end{tabular}
		\end{center}
	\end{propos}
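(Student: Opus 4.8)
The plan is to impose the two conditions in succession: first use the tensor behaviour of \STMF{} to reduce to simple groups; then extract from the classification of \TMF{} weights (the preceding proposition) those that are in addition \STMF{}; then impose \SMF{}; and finally rewrite the surviving weights in the geometric language of the table. For the first step, recall that an irreducible representation of a semisimple group $G=G_1\times\dots\times G_m$ with the $G_i$ simple is a box product $V_{\lambda_1}\boxtimes\dots\boxtimes V_{\lambda_m}$, whose highest weight orbit is the product $X_1\times\dots\times X_m$ of the highest weight orbits of the factors, embedded in $\mathbb{P}(V_{\lambda_1}\otimes\dots\otimes V_{\lambda_m})$ by $\mathscr{O}(1,\dots,1)$. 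By the lemma on products of weights, $\lambda$ is \STMF{} if and only if each $\lambda_i$ is; so once the \STMF{} condition is understood on simple groups, only \SMF{} is left, and this condition does \emph{not} split over the factors $G_i$.

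Second, I would sharpen the \TMF{} list. Since $\lambda$ is \STMF{} precisely when $n\lambda$ is \TMF{} for every $n\geqslant 1$, any \STMF{} weight appears in the table of the preceding proposition. Running through that table: for $\SL(n)$ the multiples of $k\pi_1$ (and, dually, of $k\pi_{n-1}$) are again of the form $k'\pi_1$, hence \TMF{}, so these weights are \STMF{}. In every remaining case the weight is not \STMF{}: for $\SO(2n+1)$, $\SO(2n)$, $\Sp(2n)$, $E_6$, $E_7$ and $G_2$ the double $2\pi$ already occurs among the minimal non-\TMF{} weights listed in the proof of the preceding proposition, so $\pi$ is not \STMF{}; and for $\SL(n)$ with $\pi=\pi_k$, $1<k<n-1$, a direct multiplicity computation of the same kind (carried out with LiE and SCHUR) exhibits a multiple subrepresentation in some $V_{m\pi_k}\otimes V_\mu$. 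Hence the only \STMF{} weights of simple groups are $k\pi_1$ and its dual on $\SL(n)$. Consequently a semisimple group carrying an \STMF{} irreducible representation is, up to isogeny and after passing to its saturated form $\prod_i\GL(n_i)$ (as in the remark on saturatedness), a product of special linear groups, the representation is $\bigotimes_i\Sym^{k_i}V_i$ with some factors possibly dualized, and the highest weight orbit is the Segre--Veronese variety $\prod_i\mathbb{P}(V_i)\hookrightarrow\mathbb{P}\bigl(\bigotimes_i\Sym^{k_i}V_i\bigr)$, embedded by $\mathscr{O}(k_1,\dots,k_m)$.

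Third, it remains to decide which of these Segre--Veronese embeddings are \SMF{}, i.e.\ have $\Lambda^j\bigl(\bigotimes_i\Sym^{k_i}V_i\bigr)$ free of multiplicities for all $j$; equivalently, one may quote the classification of \SMF{} saturated irreducible representations from~\cite{Pecher} and keep its Segre--Veronese entries. For a single factor this amounts to ``$\Lambda^j(\Sym^k V)$ is multiplicity-free for all $j$'', which holds for $k=1$, for $k=2$ (the quadratic Veronese treated in Section~\ref{sec:main}), for $\dim V=2$ with $k\leqslant 6$, and for $\dim V=3$ with $k=3$, and fails otherwise. For two or more factors one expands $\Lambda^j$ of the tensor product by the Cauchy formula $\Lambda^j(A\otimes B)=\bigoplus_{\mu\vdash j}\Sigma_\mu A\otimes\Sigma_{\mu'}B$, and the constraints on the single-factor plethysms compound so as to leave only: all $k_i=1$ (the Segre embeddings $\mathbb{P}(U)\times\mathbb{P}(V)\subset\mathbb{P}(U\otimes V)$ and the triple product $\mathbb{P}^1\times\mathbb{P}^1\times\mathbb{P}^1\subset\mathbb{P}^7$); $\mathscr{O}(2,1)$ on $\mathbb{P}^2\times\mathbb{P}^1$ and on $\mathbb{P}^1\times\mathbb{P}^k$ with $k\leqslant 4$; and $\mathscr{O}(3,1)$ on $\mathbb{P}^1\times\mathbb{P}^1$. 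Spelling each surviving case out with its ambient projective space and restricted hyperplane class reproduces the table exactly --- a few small embeddings, such as $\mathbb{P}^1\subset\mathbb{P}^1$ and the conic $\mathbb{P}^1\subset\mathbb{P}^2$, legitimately occur in more than one row --- while the orthogonal, symplectic and exceptional weights were already removed at the \STMF{} stage.

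I expect the genuine obstacle to be this last step: pinning down the \SMF{} boundary, i.e.\ showing simultaneously that each listed Segre--Veronese datum is skew-multiplicity-free and that every larger one is not. This is the combinatorially heavy part --- multiplicity-free plethysms and their tensor products --- and it is precisely where the argument relies on~\cite{Pecher} and on the explicit LiE/SCHUR computations. By comparison, the reduction to \STMF{} weights is clean once the \TMF{} table is in hand, and the final passage to the table amounts to bookkeeping about geometric equivalence of highest weight orbits.
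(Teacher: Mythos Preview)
Your outline is correct and matches the paper's high-level strategy: first use \STMF{} to reduce to Segre--Veronese varieties (via ``$\lambda$ is \STMF{} iff $n\lambda$ is \TMF{} for all $n$'' together with the \TMF{} table), then sift these through \SMF{}. The reduction step is handled the same way in both.

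The substantive difference is in how the \SMF{} step is executed. You propose to quote the classification from~\cite{Pecher} (noting saturatedness so that the two variants of \SMF{} agree), with the Cauchy formula as a heuristic guide. The paper instead gives a self-contained argument: it introduces three ``enlargement'' moves on a Segre--Veronese datum $\bigl(\begin{smallmatrix}m_1&\cdots&m_n\\a_1&\cdots&a_n\end{smallmatrix}\bigr)$ --- adding a trivial factor, increasing some $m_i$, increasing some $a_i$ --- and proves that non-\SMF{} is preserved under each. The nontrivial one is increasing $a_i$, for which the paper proves a lemma giving a canonical embedding
\[
\Hom_G(V_\theta,\Lambda^kV_\lambda)\hookrightarrow\Hom_G(V_{\theta+k\nu},\Lambda^kV_{\lambda+\nu})
\]
via multiplication by the identity section on $G/B\times G/B^-$, mirroring the earlier \TMF{} monotonicity lemma. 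This reduces the \SMF{} check to a finite list of eleven explicit minimal non-\SMF{} dimension--degree matrices, each then disposed of by a single LiE computation. Your approach buys brevity by outsourcing to~\cite{Pecher}; the paper's buys self-containment and an independent proof of the monotonicity principle, at the cost of the embedding lemma and the explicit boundary list.

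One minor point: when you rule out $\pi_k$ on $\SL(n)$ for $1<k<n-1$, you do not need a fresh LiE computation. Since the preceding proposition asserts that its \TMF{} list is complete, and $2\pi_k$ is not on it, $\pi_k$ is automatically not \STMF{}. The paper uses exactly this shortcut.
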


	\begin{proof}
		The property of the representation to be STMF deduces the classification to Segre--Veronese varieties.
		This fact follows from the fact that a weight~$\lambda$ is~STMF if and only if~$n\lambda$ is~TMF for any~$n$.
		Classification of TMF weights is done above.
		There only the weights of~$\SL(V)$ can be again TMF after multiplication by any integer.
		For convenience we will consider representations of the group $\GL(V)$ instead of~$\SL(V)$.
		It remains only to find SMF representations among the representations of the form~$\Sym^{a_1}V_1\otimes\ldots\otimes\Sym^{a_n}V_n$ of the group~$\GL(V_1)\times\ldots\times\GL(V_n)$.
		One can easily check the cases listed in the table applying the Cauchy formulas in infinite cases and directly in the remaining finite number of cases.

		Let us show that there are no other cases.
		Each Segre--Veronese embedding is defined by the dimenstions~$m_i=\dim V_i$ and the degrees~$a_i$.
		Let us write them into two rows:
		\[
			\begin{pmatrix}
				m_1 & \ldots & m_n \\
				a_1 & \ldots & a_n \\
			\end{pmatrix}
		\]
		Note that there are three ways to "enlarge" a Segre--Veronese embedding:
		\begin{itemize}
			\item[(i)] to add a column~$\binom{1}{1}$,
			\item[(ii)] to inscrease a dimension~$n_i$,
			\item[(iii)] to inscrease a degree~$a_i$.
		\end{itemize}
		Let us show that the property of a weight not to be SMF is preserved.
		The case~(i) trivially follows from the isomorphism
		\[
			\Hom_G(W,\Lambda^k V) \cong \Hom_{G\times\Bbbk^\times} (W\otimes\Bbbk, \Lambda^k(V\otimes \Bbbk))
		\]
		for each representations~$V$ and~$W$ of~$G$, where~$\Bbbk$ is the tautological representation of~$\Bbbk^\times = \GL(1)$.
		Clearly, if we increase any~$n_i$, then all the multiplicities of subrepresentations can not decrease.
		Therefore the case~(ii) is obvious.
		The case~(iii) follows from the fact that if
		$
			\dim\Hom_G(V_\theta, \Lambda^k V_\lambda) \geqslant 2
		$, then $
			\dim\Hom_G(V_{\theta+k\nu,\Lambda^kV_{\lambda+\nu}}) \geqslant 2,
		$
		where~$\lambda$, $\mu$ and~$\nu$ are dominant weights of a reductive group~$G$.
		It is a particular case of the following lemma.

		\begin{lemma}
			Let~$G$ be a reductive group, $\lambda$, $\nu$ and~$\theta$ are dominant weights.
			Then there is a canonical embedding
			\[
				\Hom_G (V_\theta, \Lambda^k V_\lambda) \subseteq \Hom_G (V_{\theta + k\nu}, \Lambda^kV_{\lambda+\nu})
			\]
			for any~$k$.
			In particular, if~$V_\lambda$ is not SMF, then~$V_{\lambda+\nu}$ is not SMF for any dominant~$\nu$.
		\end{lemma}

		\begin{proof}
			Consider the variety~$X=G/B$.
			Global sections of the sheaf~$\mathscr{F}_\theta$ can be identified with~$V_\theta^*$.
			Since~$-\lambda$ is dominant with respect to~$B^-$, global sections of~$\mathscr{F}_{-\lambda}$ on~$X'=G/B^-$ can be identified with~$V_\lambda$.
			Therefore global sections of~$\Lambda^k\mathscr{F}_{-\lambda}$ can be identified with~$\Lambda^kV_\lambda$.
			So we have~$\Gamma(X\times X', \mathscr{F}_\theta\boxtimes\Lambda^k\mathscr{F}_{-\lambda}) = V_\theta^*\otimes \Lambda^kV_\lambda$.

			Consider the identical section~$\imath_{\nu,k}\in \bigotimes_k(V_\nu^*\otimes V_\nu)=\Gamma(X\times X', \mathscr{F}_\nu^{\otimes k}\boxtimes\mathscr{F}_{-\nu}^{\otimes k})$.
			On~$X$ multiplication by~$\imath_{\nu,k}$ induces the sheaf morphism~$\mathscr{F}_\theta \to \mathscr{F}_{\theta+k\nu}$.
			On~$X'$ it induces the morphism~$\Lambda^k\mathscr{F}_{-\lambda} \to \Lambda^k\mathscr{F}_{-\lambda-\nu}$.
			Since~$\imath_{\nu,k}$ vanishes nowhere, multiplication by it induces the required $G$-equivariant embedding on global sections:
			\[
				V_\theta^* \otimes \Lambda^kV_\lambda \xrightarrow{\cdot \imath_{k,\nu}} V_{\theta+k\nu}^* \otimes \Lambda^kV_{\lambda+\nu}.
			\]
		\end{proof}

		Now, from~(i--iii) one can see that it remains to check a finite set of Segre--Veronese embeddings.
		The set of minimal non-SMF representations to check is given by the following dimension-degree matrices:
		\begin{multline*}
			\begin{pmatrix}
				2 & 2 & 2 & 2 \\
				1 & 1 & 1 & 1 \\
			\end{pmatrix},
			\begin{pmatrix}
				2 & 2 & 2 \\
				1 & 1 & 2 \\
			\end{pmatrix},
			\begin{pmatrix}
				2 & 2 & 3 \\
				1 & 1 & 1 \\
			\end{pmatrix},
			\begin{pmatrix}
				2 & 2 \\
				2 & 2 \\
			\end{pmatrix},
			\begin{pmatrix}
				2 & 2 \\
				1 & 4 \\
			\end{pmatrix},\\
			\begin{pmatrix}
				2 & 3 \\
				1 & 2 \\
			\end{pmatrix},
			\begin{pmatrix}
				3 & 2 \\
				1 & 3 \\
			\end{pmatrix},
			\begin{pmatrix}
				5 & 2 \\
				1 & 2 \\
			\end{pmatrix},
			\begin{pmatrix}
				4 \\
				3 \\
			\end{pmatrix},
			\begin{pmatrix}
				3 \\
				4 \\
			\end{pmatrix},
			\begin{pmatrix}
				2 \\
				7 \\
			\end{pmatrix}.
		\end{multline*}
		For each of them it is easy to find corresponding exterior power with a multiple subrepresentation, using, for example, LiE computer algebra system.
	\end{proof}

\appendix

\section{Examples}
\label{sec:app}

\begin{example}
	Consider the Veronese embedding $X=\mathbb{P}^3\subset \mathbb{P}^9$.
	There exist $8$ symmetric Young diagrams with even diagonal of height at most $4$:
	\begin{center}
		\begin{tabular}{|c|c|c|c|} \hline
			diagram & representation & syzygy space & dimension \\ \hline
			$\varnothing$ & $\Bbbk$ & $R_{0,0}$ & $1$ \\ \hline
			\rule{0pt}{4ex}\tiny\yng(2,2) & $\Sigma_{2,2}V^*$ & $R_{1,2}$ & $20$ \\ \hline
			\rule{0pt}{5.6ex}\tiny\Yvcentermath{1}\yng(3,2,1) & $\Sigma_{3,2,1}V^*$ & $R_{2,3}$ & $64$ \\ \hline
			\rule{0pt}{5.6ex}\tiny\Yvcentermath{1}\yng(3,3,2) & $\Sigma_{3,3,2}V^*$ & \multirow{2}{*}{$R_{3,4}$} & $45$ \\ \cline{0-1}\cline{4-4}
			\rule{0pt}{7.2ex}\tiny\Yvcentermath{1}\yng(4,2,1,1) & $\Sigma_{4,2,1,1}V^*$ && $140$ \\ \hline
			\rule{0pt}{7.2ex}\tiny\Yvcentermath{1}\yng(4,3,2,1) & $\Sigma_{4,3,2,1}V^*$ & $R_{4,5}$ & $64$ \\ \hline
			\rule{0pt}{7.2ex}\tiny\Yvcentermath{1}\yng(4,4,2,2) & $\Sigma_{4,4,2,2}V^*$ & $R_{5,6}$ & $20$ \\ \hline
			\rule{0pt}{7.2ex}\tiny\Yvcentermath{1}\yng(4,4,4,4) & $\Sigma_{4,4,4,4}V^*$ & $R_{6,8}$ & $1$ \\ \hline
		\end{tabular}
	\end{center}
	In particular, there exists a resolution:
	\[
		0\to
		\mathscr{O}(-8)\to
		\mathscr{O}(-6)^{\oplus 20}\to
		\mathscr{O}(-5)^{\oplus 64}\to
		\mathscr{O}(-4)^{\oplus 185}\to
		\mathscr{O}(-3)^{\oplus 64}\to
		\mathscr{O}(-2)^{\oplus 20}\to
		\mathscr{O}\to
		\mathscr{O}_X\to 0.
	\]
\end{example}

\begin{example}
	Consider the Veronese embedding $X=\mathbb{P}^2\subset\mathbb{P}^5$ and the sheaf $\mathscr{F}=\mathscr{O}_X(1)$.
	There exist $4$ symmetric Young diagrams of height at most $3$ with odd diagonal length.
	\begin{center}
		\begin{tabular}{|c|c|c|} \hline
			\rule{0pt}{2ex}\tiny\yng(1) & $R_{0,0}$ & $V^*$ \\ \hline
			\rule{0pt}{4ex}\tiny\yng(2,1) & $R_{1,1}$ & $\Sigma_{2,1}V^*$ \\ \hline
			\rule{0pt}{6ex}\tiny\yng(3,1,1) & $R_{2,2}$ & $\Sigma_{3,1,1}V^*$ \\ \hline
			\rule{0pt}{6ex}\tiny\yng(3,3,3) & $R_{3,4}$ & $\Sigma_{3,3,3}V^*$ \\ \hline
		\end{tabular}
	\end{center}
	In particular, there exists a resolution:
	\[
		0\to
		\mathscr{O}(-4)\to
		\mathscr{O}(-2)^{\oplus 6}\to
		\mathscr{O}(-1)^{\oplus 8}\to
		\mathscr{O}^{\oplus 3}\to
		\mathscr{F}\to 0.
	\]
\end{example}

\section{Weighted projective spaces}
\label{sec:wps}

	Another interesting question is how to calculate syzygies of weighted projective spaces.
	This problem is surprisingly hard.
	For example, if we take the space $\mathbb{P}(1^l,k^m)$ and consider its embedding into a projective space of minimal dimension, then the problem of calculation of syzygies is equivalent to calculation of syzygies of the Veronese embedding of $\mathbb{P}^l$ of degree $k$.
	This demonstrates why the problem is so hard in the general case.
	But in the case $\mathbb{P}(1^l,2^m)$ the problem can be deduced to the quadratic Veronese embedding, where the answer is known.

	\begin{propos}
		Let $R_{p,q}$ be the syzygy spaces of weighted projective space $\mathbb{P}(1^l,2^m)\subset \mathbb{P}^{\binom{l+1}{2}+m}$.
		Then there exist an isomorphism of representations of $\GL(l)$:
		\[
			R_{p,q}=\bigoplus_{{\lambda=\lambda'\atop \mathrm{wt}(\lambda)=2q}\atop l(\lambda)=2q-2p}\Sigma_\lambda\Bbbk^l.
		\]
	\end{propos}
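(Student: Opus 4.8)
The plan is to reduce the computation for the weighted projective space $\mathbb{P}(1^l,2^m)$ to the already-solved case of the quadratic Veronese embedding of $\mathbb{P}^{l-1}$, treated in Theorem~\ref{main Ver2 sheaf}. First I would identify the homogeneous coordinate algebra of $\mathbb{P}(1^l,2^m)\subset\mathbb{P}^{\binom{l+1}{2}+m}$ under its minimal embedding. If $V=\Bbbk^l$ carries the weight-$1$ variables and $U=\Bbbk^m$ the weight-$2$ variables, then the degree-$2$ piece of the ambient polynomial ring is $\Sym^2 V^*\oplus U^*$, and the coordinate algebra of the weighted projective space is $\bigoplus_{k\geqslant0}\bigl(\Sym^{2k}V^*\oplus(\text{contributions of }U^*)\bigr)$; more precisely it is the subalgebra of $\Bbbk[V^*]\otimes\Sym^\bullet U^*$ spanned in degree $2k$ by $\bigoplus_{j}\Sym^{2k-2j}V^*\otimes\Sym^{j}U^*$. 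The key structural observation is that, as a module over $S=\Sym^\bullet(\Sym^2V^*\oplus U^*)$, this algebra is obtained from the $\Sym^\bullet\Sym^2V^*$-module $A=\bigoplus_{m\geqslant0}\Sym^{2m}V^*$ (the $a=0$ case of~\S\ref{sec:main}) simply by extension of scalars along $\Sym^\bullet\Sym^2V^* \hookrightarrow S$, i.e.\ by tensoring with the polynomial ring $\Sym^\bullet U^*$ in the new variables.

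The second step is the standard fact that a minimal free resolution is preserved, up to the appropriate regrading, under such a flat base change: if $F_\bullet \to A$ is the minimal $\Sym^\bullet\Sym^2V^*$-free resolution computed in Theorem~\ref{main Ver2 sheaf}, then $F_\bullet\otimes_{\Sym^\bullet\Sym^2V^*}S \to A\otimes\Sym^\bullet U^*$ is a minimal $S$-free resolution, since $\Sym^\bullet U^*$ is free (hence flat) over $\Bbbk$ and all differentials still have positive degree. Consequently the syzygy spaces of $\mathbb{P}(1^l,2^m)$ coincide, as graded $\GL(V)=\GL(l)$-representations, with the syzygy spaces $R_{p,q}^{0}$ of the quadratic Veronese embedding $\mathbb{P}(V)\subset\mathbb{P}(\Sym^2V)$; the extra variables $U^*$ contribute only to the ring $S$, not to the vector spaces $R_{p,q}$ themselves. (Here one must be slightly careful: the weight-$2$ variables shift internal degrees, but since $R_{p,q}^0$ is supported in $q=p$ for the "$0$-cube" part, the regrading is transparent.)

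Finally I would invoke the $a=0$ specialization of~\eqref{RpqVer2sheaf}: in that case $q>0$ forces $\omega=\omega'$, $\mathrm{wt}(\omega)=q$... wait, one must match conventions — in Theorem~\ref{main Ver2 sheaf} the index is $R_{p,p+q}$ with $l(\omega)=2q-a$, so for $a=0$ we get contributions of $\Sigma_\omega V^*$ with $\omega$ symmetric, $\mathrm{wt}(\omega)=q$, $l(\omega)=2q$, placed in homological degree $p$ with $p+q$ the internal degree; reindexing by the total degree gives exactly $R_{p,q}=\bigoplus_{\lambda=\lambda',\ \mathrm{wt}(\lambda)=2q,\ l(\lambda)=2q-2p}\Sigma_\lambda\Bbbk^l$, and the $p>0,q=0$ branch of~\eqref{RpqVer2sheaf} does not occur here because the module $A$ itself (not a twist) has no higher-degree generators. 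The main obstacle I anticipate is the bookkeeping in the first two steps: verifying carefully that the minimal embedding of $\mathbb{P}(1^l,2^m)$ really does have coordinate algebra $A\otimes\Sym^\bullet U^*$ over the correct polynomial ring (rather than something with extra generators coming from the mixed monomials $x_i u_j$ in intermediate degrees), and confirming that base change along $\Sym^\bullet\Sym^2V^*\hookrightarrow S$ genuinely preserves \emph{minimality} and not just exactness. Once those are pinned down, the formula follows immediately by substituting $a=0$ into Theorem~\ref{main Ver2 sheaf} and relabelling indices.
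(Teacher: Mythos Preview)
Your approach is correct and is essentially the same as the paper's: the paper observes that $\mathbb{P}(1^l,2^m)$ is an iterated cone over the quadratic Veronese embedding of $\mathbb{P}^{l-1}$, so the equations (and hence the syzygy spaces) are unchanged, which is exactly your flat base-change argument $A\otimes_{\Sym^\bullet\Sym^2V^*}S$ phrased geometrically. Your residual worries about minimality and the identification of the coordinate algebra are unfounded---adjoining free variables (i.e.\ taking cones) manifestly preserves both---so the argument goes through just as you outline.
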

	Actually, this embedding can be obtained from the quadratic Veronese embedding by the following operation applied few times: we can embed it into a projective space of dimension greater by~$1$ and there take a cone over previous variety.
	Here the set of equation is preserved, but some new variables appear.
	Obviously, the syzygy spaces are the same.

	Difficulty of calculation for more general projective embeddings of weighted projective spaces is related to the fact that their automorphism groups are usually significantly less.
	Correspondingly, theory of representations of these groups does not provide so powerful instrument for calculation of syzygies.

\renewcommand{\refname}{References}

\bibliographystyle{plain}

\end{document}